\newtheorem{theorem}{Theorem}
\newtheorem{lemma}{Lemma}
\newtheorem{fact}{Fact}
\newtheorem{proposition}{Proposition}
\newtheorem{corollary}{Corollary}
\newenvironment{proofs}{%
  \proof}{\endproof}
      \theoremstyle{plain}
\def\BState{\State\hskip-\ALG@thistlm}
\newcommand{\e}{\varepsilon}
\newcommand{\la}{\langle}
\newcommand{\ra}{\rangle}
\newcommand{\F}{\mathcal F}
\newcommand{\E}{\mathbb E}
\newcommand{\R}{\mathbb R}
\newcommand{\Var}{\text{Var}}
\newcommand{\de}{\delta}
\newcommand{\Prob}{\mathbb P}
\begin{document}

\twocolumn[
\aistatstitle{Safe Convex Learning under Uncertain Constraints}


\aistatsauthor{ Ilnura Usmanova \And  Andreas Krause \And  Maryam Kamgarpour }
\aistatsaddress{ Automatic Control Laboratory,\\
  ETH Zürich, Switzerland \And  Machine Learning Institute,\\
  ETH Zürich, Switzerland \And Automatic Control Laboratory,\\
  ETH Zürich, Switzerland
  } ]

 \begin{abstract}
We address the problem of minimizing a convex smooth function $f(x)$ over a compact polyhedral set $D$ given a stochastic zeroth-order constraint feedback model. This problem arises in safety-critical machine learning 
applications, such as personalized medicine and robotics. In such cases, one needs to ensure constraints  are satisfied while exploring the decision space to find optimum of the loss function. We propose a new variant of the Frank-Wolfe algorithm, which applies to the case of uncertain linear constraints. Using robust optimization, we provide the convergence rate of the algorithm while guaranteeing feasibility of all iterates, with high probability.\footnote{We thank the support of Swiss 
National Science Foundation, under the grant SNSF 200021\_172781, and ERC under the European Union's Horizon 2020 research and innovation programme 
grant agreement No 815943.}
\end{abstract}

\section{INTRODUCTION}

Many optimization tasks in robotics, health sciences, and finance require minimizing a loss function under  uncertainties. Most existing stochastic and online optimization approaches proposed to address these tasks assume that the constraints of the corresponding optimization problems are known.  
These approaches, however, are unacceptable in cases in which the feasible set is itself unknown and  is learned 
online.
Optimizing a loss function under such a partially revealed feasible set model is further challenged by the fact that exploration can be made only inside the feasible set due to safety reasons.  Hence, one needs to carefully choose actions to ensure feasibility of each iterate {with high probability} while learning the optimal solution. In the machine learning community, this problem is known as \textit{safe learning}. 

Safe learning is receiving increasing attention  due to the increasingly widespread deployment of machine learning 
in safety-critical tasks. An example 
arises in 
personalized medicine, where physicians may choose from a large set of therapies. The effects of different therapies on the patient are initially unknown, and can only be determined through clinical trials. 
Free exploration however is not possible since some therapies might cause discomfort or even physical harm \citep{pmlr-v37-sui15}. 
Similar challenges  arise in designing control algorithms for robots, which have to navigate  unexplored terrains or interact with humans \citep{cassandra1996acting,koenig1996unsupervised}. In these scenarios, robots need to learn the best tuning for their controllers or optimize their trajectories based on risky experimental interactions with the partially known environment. 

We address the problem of safe learning given a convex loss function subject to unknown constraints. Motivated by the aforementioned applications, we assume that the decision maker has access only to noisy observations of the constraints for a chosen action. Our objective is to design an algorithm that sequentially steers the decisions towards the optimum while ensuring safety of the decisions at every step. In this paper, we restrict ourselves to unknown linear constraints. Perhaps surprisingly, there is very little work on safe learning for this rudimentary setup. Hence, we consider this as a first step towards developing fundamental understanding and design of efficient algorithms for the more general nonlinear and non-convex setting. 

\paragraph{Related work.}

There are many optimization  algorithms that ensure feasibility of the iterates, assuming a known constraint function.
The most basic ones are projected gradient descent (PGD)\citep{boyd2004convex} and Frank-Wolfe (FW) \citep{frank1956algorithm} (also known as conditional gradient).   
These methods require exact knowledge of the constraints or at least a projection oracle or an exact linear programming (LP) oracle with respect to {the} constraints.  
{However,} as discussed above, this information may not be available in safe learning problems.

Assuming the functional form of the constraints, with parameters  drawn from a probability distribution, chance-constraint optimization addresses the problem of optimizing a loss function subject to constraint satisfaction with a sufficiently high probability  \citep{el1998robust}.
The proposed solution methods for this problem assume either a priori knowledge of the distribution of the parameters \citep{el1998robust}
, moments of the distribution \citep{zymler2013distributionally}, 
or a sufficiently large number of samples of the distribution \citep{calafiore2005uncertain}. In contrast, in a safe learning  problem, the decision maker does not have access to such information a priori. This information is gathered {online} and feasibility needs too be ensured while exploring the uncertain decision space. 

A recent line of work addresses uncertain constraints in online stochastic optimization \citep{yu2017online,yu2016low}. The work is based on infeasible penalty methods, and thus does not provide guarantees on constraint violation at each iteration. Rather, the methods ensure 
convergence of the \emph{average} constraint violation to zero. Similarly, risk-aware contextual bandits  and bandits with knapsack constraints 
\citep{sun2017safety,mahdavi2012trading,jenatton2015adaptive} consider unknown constraint functions with a budget limit. Here, safety refers to ensuring that the total usage of a commodity, e.g.,  budget for adverts,  summed over the sequence of iterates remains below a threshold. Similar to \cite{yu2017online,yu2016low}, the above approaches bound average constraint violation, rather than avoiding violation at each iteration. While such a formulation can be well-suited in certain problems such as adverts, it may not be well-suited for  safe learning applications discussed above because in  this latter case, constraints need to be satisfied at each step. 

The problem of safe learning using Gaussian processes (GP) has been proposed in \cite{pmlr-v37-sui15}. The  SafeOpt algorithm developed in the above work considers minimizing an unknown loss function iteratively, while ensuring that the loss of each iterate is above a required threshold. Given  {actively taken} measurements of the loss, the initial estimate of the feasible set is incrementally enlarged through exploration and considering certain regularities of GP kernels. This framework is extended to multiple constraints and experimentally validated on robotic platforms by \cite{berkenkamp2016bayesian}. Safe GP learning is powerful as it can address general non-convex problems.  
Nevertheless, due to this generality, current approaches do not scale well with the problem dimension. This motivates our work of developing efficient safe learning algorithms for the case of convex loss functions and constraints. 

\paragraph{Our contributions.} We propose a novel algorithm for safe 
{active} learning, given a smooth convex objective and a set of unknown linear constraints with noisy oracle information.
Given a confidence level $1-\delta$ and accuracy $\epsilon$,  we prove that after $\tilde O\left(\frac{1}{\epsilon}\right)$ iterations and $\tilde O\left(\frac{d^3\ln 1/\delta}{\epsilon^2}\right)$ constraints measurements, the final point is an $\epsilon$-accurate solution with probability $1-\delta$ (cf. Theorem 2). 
By $\tilde O(\cdot)$ we denote $O(\cdot)$ up to a logarithmic multiplicative factor. 
Furthermore, we ensure feasibility 
for the trajectory of the iterates with probability at least $1-\delta$ (cf. Theorem 1). 
While in this paper we mainly focus on exact first-order oracles for the objective function, we discuss extensions to stochastic oracles 
in \Cref{sec:convergence}. 

The core idea of our algorithm is to combine a first-order feasible optimization approach with the robust optimization technique. Specifically, our algorithm is based on the Frank-Wolfe (FW) method. 
In each iteration, it solves an \emph{uncertain} linear program based on  estimates of the constraints and uses this solution to define the step direction. The safety of the iterates is ensured with high probability by refining the confidence set of the unknown parameters iteratively. We emphasize that while we use robust optimization 
\citep{ben1998robust, ben1999robust, ben2000robust},
our problem formulation is different than that of a classical robust optimization. Specifically, we consider gathering information {online} about the uncertainty, whereas the robust optimization works assume one-shot knowledge of uncertainties.
We numerically evaluate the performance of the proposed algorithm in Section \ref{sec:simulations} and compare its performance  with a one-shot robust optimization approach.

\paragraph{Notations}
Let $I^d \in \R^{d\times d}$ 
denote identity matrix,  $e_i\in \R^d$ the unit  vector corresponding to the $i$-th coordinate. Let $\|\cdot\|$ denote the Euclidean norm for vectors and the spectral norm for matrices. The ball of radius $r$ centered at a point $x_0\in \R^d$ is denoted by $B^d(x_0,r) = \{x\in \R^d| \|x-x_0\|\leq r\}$. 

\section{PROBLEM FORMULATION}\label{sec:problem}
The problem of safe learning in its most general form can be defined as a constrained optimization problem 
\begin{align*}
  &\min_{x\in \R^d} ~~~~~~~f(x) \\
 &\text{subject to }  f_i(x) \leq 0~~ \forall i=1,\ldots,m,
\end{align*}
where the objective function $f: \R^d \rightarrow \R$ and the constraints $f_i: \R^d \rightarrow \R$ are unknown, and can only be accessed at feasible points $x$. A possibly noisy oracle provides access to the values of these  functions or their gradients at any queried feasible $x \in \R^d$. The objective is to design an iterative algorithm that chooses the query points to ensure feasibility at each round while progressing towards the optimum. To address this goal, we need to define the oracle more precisely and make some regularity assumptions on the functions.

In this paper, we consider an instance of the safe learning problem in which the objective $f$ is convex and $M$-Lipschitz continuous, that is,  $|f(x) - f(y)| \leq M\|x-y\| ~ \forall x,y \in D$, where $D$ is the feasible set. Furthermore, we assume $f$ is $L$-smooth, that is, $f$ has $L$-Lipschitz continuous gradients in $D$,  $\|\nabla f(x) - \nabla f(y)\| \leq L\|x-y\|,~ \forall x,y \in D$. {We assume access to the gradients of the objective function, $\nabla f(x)$,  at any feasible query point $x \in D$.} We furthermore assume that constraints are known to be linear, $f_i(x)  = [a^i]^Tx - b^i$ for $i=1, \dots, m$. Hence, letting $A \in \R^{m \times d}$ denote the matrix with rows defined by $[a^i]^T$, the problem is given by 
\begin{align}\label{problem}
&\min_{x\in \R^d} ~~~~~~~~~f(x)\\
 &\text{subject to }\, Ax-b \leq 0. \nonumber
\end{align}
 We assume that the feasible set $D = \{x\in \R^d: Ax-b\leq 0\}$ is a compact polytope  with non-empty interior. Denote by $\Gamma$ the diameter of the set $D$, $\Gamma =  \max_{x,y\in D}\|x-y\|$. Furthermore, let $\Gamma_0$ be the radius of the smallest ball centered at 0 such that $D\in B(0,\Gamma_0)$, 
namely,  $\Gamma_0 =  \max_{x\in D}\|x\|$.

If $A$ and $b$ are known, (\ref{problem}) can be solved efficiently by off-the-shelf first-order convex optimization algorithms. We however, consider the case in which $A$ and $b$ are unknown and can be accessed through an oracle. Specifically, we assume the constraints can be evaluated at any point that lies within a ball of radius $\omega_o$ of the feasible set. These evaluations are corrupted by sub-Gaussian noise.
 Hence, we have access to  $y(x)= Ax - b + \eta$  for any $x$ such that  $B^d(x,\omega_0)\cap D \neq \emptyset$, where $\eta$ are sub-Gaussian. 
 {If in the problem setting having all the measurements inside the feasible set is critical, we can artificially shrink the set $D$ by the value $\omega_0$ from the boundaries. This can be achieved by tightening the constraints $[a^i]^Tx\leq b^i$ with setting the measurements $\hat y^i = y^i - \mathcal\kappa = [a^i]^Tx - b^i + \eta - \mathcal\kappa^i$, with $\mathcal\kappa^i \geq L^i_{A}\omega_0$, where $L^i_{A}$ is an upper bound on $\|a^i\|$. 

The scope of the present paper is to design an algorithm which, starting from a feasible point $x_0 \in D$, converges to an optimal solution $x_*$ with a required accuracy $\epsilon$ and a required confidence $1-\de$ after $T$ steps, that is, \begin{align}\Prob \{ f(x_{T}) - f(x_*) \leq  {\epsilon}\} \geq 1-\de.\end{align}  Since the constraint set $D$ is unknown and revealed through a noisy oracle, we can at the very best  ensure to  remain inside the feasible set with sufficiently high probability. Hence, we require that the updates of the method are not violating the true constraints with the same required confidence level of $1-\de$, that is, 
\begin{align}
\label{eq:safety_iterate}
\Prob\{ Ax_t -b \leq 0, \; 0\leq t\leq T\} \geq 1-\de.
\end{align} 

Some words on the choices of the optimization and oracle above are in order. First, the setting of linear constraints can be restrictive for some real-world problems. Nevertheless, understanding the linear setup is often the first step in addressing more challenging formulations. Second, having a noisy first-order or  a zeroth-order oracle for the objective function is more realistic for several safe learning problems. Optimization under such oracle models have been deeply explored for the case in which the constraint set is known. Hence, the main novelty and challenge in  safe learning  is ensuring feasibility of the iterates despite uncertain and incrementally revealed constraint values. We discuss how the proposed algorithm can be generalized to stochastic oracle models for objective in \Cref{sec:convergence}.

\section{THE SFW ALGORITHM}\label{sec:algo}

We propose a variant of the Frank-Wolfe algorithm where we explicitly take into account the uncertainty about the feasible set $D$, referred to as Safe Frank-Wolfe (SFW). 
The algorithm can be summarized as follows. Starting with a feasible point $x_0\in D$, at each iteration $t =0, \dots, T$ we generate a number $n_t$ of query points 
and obtain noisy measurements of the constraint functions at these points. 
Using linear regression, we obtain an estimate $\hat{D}_t$ of the feasible set 
based on the history of obtained measurements. 
The algorithm then {uses }
$\hat{D}_t$ to obtain a direction $\hat{s}_t$ by solving the estimated Direction Finding Subproblem (DFS)  
\begin{align}\label{dfs}
\hat{s}_t = \arg\min_{s\in\hat{D}_t} \la \nabla f(x_t),s \ra. \end{align}
The next iterate is then given by $x_{t+1} = x_t + \gamma_t (\hat s_t - x_t)$, according to a chosen step-size $\gamma_t$. 
Below, we further describe each step of the proposed algorithm.

\paragraph{Taking Measurements.} During each iteration $t$ of the algorithm, we first make measurements at 
$n_t$ number of points $x_{(j)}$ within distance $\omega_0$ of $x_t$ in $d$ linearly independent directions. The  number $n_t$ needs to satisfy a lower bound  as a function of the input data $\de$, $T$, to ensure safety. This bound is provided in \Cref{thm:safety}.
Denote by $X_t = [x_{(1)},\ldots,x_{(N_t)}]^T \in \R^{N_t\times d}$  and by $N_t = \sum_{k=0}^t n_k$, the total number of available measurements at iteration $t$.  
Combining  all measurements taken up to iteration $t$ we have the following information about the constraints 
$y^i = X_t a^i -  b^i\textbf{1} + \eta^i, \,i = 1,\ldots,m,$
where $y^i \in \R^{N_t}$ is the vector of $N_t$ measurements of $i$-th constraint, $\eta^i= [\eta^i_{1},\ldots,\eta^i_{N_t}]^T\in \R^{N_t}$ is the vector of errors.  
The errors $\eta^i_j$ are 
independent and $\sigma$-sub-Gaussian, which means 
$$\forall \lambda \in \R ~\forall i = 1,\ldots,m~\forall  j \leq N_t~ \E\left[ e^{\lambda \eta^i_{j}}
\right] \leq \text{exp}\left(\frac{\lambda^2\sigma^2}{2}\right).$$ The sub-Gaussian condition implies that $\E\left[\eta^i_{j} 
\right] = 0$ and $\Var\left[\eta^i_{j} 
\right] \leq \sigma^2$. 
An example of $\sigma$-sub-Gaussian $\eta^i$ are independent zero-mean Gaussian random variables with variance at most $\sigma^2$, or independent bounded zero-mean variables lying in an interval of length at most $2\sigma$.  We also denote by $\mathbf{1} \in \R^{N_t}$ the vector of 1's. 
Let us denote by $Y_t = [y^1,\ldots,y^m]\in \R^{N_t\times m}$ 
the matrix of corresponding measurements of the constraints.

\paragraph{Estimating  Constraints. }
Let $\beta^i = \begin{bmatrix}[a^i]^T&
b^i\end{bmatrix}^T\in \R^{d+1}$ denote  the vector corresponding to the $i$-th constraint. We refer to $\beta^i$ as the true parameter. Let $\bar{X}_t = [X_t,-\textbf{1}]\in \R^{N_t \times (d+1)}$ be the extended version of the matrix $X_t$. The Least Squares Estimation (LSE)  of the constraint parameters at step $t$ is given by \begin{align}\label{mse}
\hat{\beta}_t = [\hat{A}_t, \hat{b}_t]^T = [\bar{X}_t^T\bar{X}_t]^{-1}\bar{X}_t^TY_t. \end{align}
The covariance matrix of the $\hat \beta^i_t$ is given by $\Sigma_t = \sigma^2[\bar{X}_t^T\bar{X}_t]^{-1}$. 
Let $\hat a_t^i, \hat b_t^i$ denote the estimates of the corresponding rows of  $\hat \beta^i_t$ and  $\hat{D}_t = \{x\in\R^d :\hat{A}_t x\leq \hat{b}_t\}$ denote the estimated feasible set.

\paragraph{Stopping criteria. }
Recall that $\hat{s}_t$ is the minimizer of the estimated DFS (\ref{dfs}) and let $\hat{g}_t$  {be }
{its }
optimal value 
%
\begin{align}\label{gap}
\hat g_t &= \min_{s\in \hat D_t}\la  s,\nabla f(x_t)\ra.
 \end{align} 
Similarly, let $s_t$ denote the minimizer of the DFS under true constraints and ${g}_t$ the corresponding optimal value
\begin{align}\label{sol_gap}
s_t= \arg\min_{s\in D }\la s,\nabla f(x_t)\ra, ~~
g_t = \min_{s\in D}\la s, \nabla f(x_t)\ra.
\end{align}
From convexity of $f$, we have  $f(x_t) - f(x_*) \leq g_t$. Thus, as discussed in \cite{jaggi2013revisiting}, $g_t$ can be taken as a surrogate duality gap and consequently a stopping criterion for the FW algorithm. In our case, the duality gap cannot be computed exactly because the feasible set $D$ is unknown. 
{Nevertheless, for the random variable $E_t:= |\hat{g}_t - g_t|$ describing an error in the gap estimation we can derive a probabilistic upper bound $\bar E_t(\de)$, such that $\Prob\{
{E_t}\leq \bar E_t(\de)\} \geq 1-\de$ (see \Cref{prop:dfs} \Cref{sec:convergence}). It follows that if $\hat{g}_t +  \bar E_t(\de) \leq \epsilon$, then with probability greater than $1- \de$ we have
$f(x_t) - f(x_*) \leq  \epsilon$.}
Thus, we use $\hat{g}_t + \bar E_t( \de)\leq \epsilon$ as a stopping criterion.

Putting the above few steps together, we present the Safe Frank-Wolfe (SFW) in Algorithm \ref{scfw}. 
\begin{algorithm}[h]
\caption{SFW (Safe Frank-Wolfe)}\label{scfw}
\begin{algorithmic}[1]
 \BState \emph{Input:} $x_0 \in D$, bound on iterations $T$, accuracy $\epsilon$,  confidence parameter {$\de$}, measurement radius $\omega_0$;
 \State $t \gets {0}$; Choose $n_t(\de, T)$;
 \While {$t \leq T$}
\State  {Pick $2d$ points around 
the current point $x_t$  
\begin{align*}
&x_{(N_{t-1}+i)} = x_t  + e_i\omega_0,\\
&x_{(N_{t-1}+2i)} =  x_t  - e_i\omega_0, i=1,\ldots,d, 
\end{align*}
and take $[n_t/2d]$ measurements at each point;}

\State Obtain the gradient $\nabla f(x_t)$ and the noisy constraint values $y^i_{(j)} 
= x_{(j)}^T a^i -  b^i + \eta^i_{(j)}$ {$\forall j =N_t+1,\ldots, N_t+n_t, i =1,\ldots, m$};
\State Compute the LSE of the constraints $\hat{A}_t$ and $\hat{b}_t$ based on (\ref{mse});
\State Solve the  estimated DFS (\ref{dfs}) to obtain $\hat s_t$;
\State 
 Estimate  {the} duality gap $\hat{g}_t$ (\ref{gap});
 \If {$\hat{g}_t \leq \epsilon - E_t(\bar \de)$} break \textbf{and} return $x_t$; \EndIf
 \State Set $\gamma_t = \frac{1}{t+2}$; 
\State  $x_{t+1} \gets x_t + \gamma_t (\hat{s}_t - x_t);$
\State $t \gets t+1.$
\EndWhile
\end{algorithmic}
\end{algorithm}


\section{SAFETY}\label{sec:safety}
In order to ensure safety of the trajectory as per Inequality \eqref{eq:safety_iterate} we ensure that each $x_{t+1}$ generated by the algorithm above
remains within the feasible set $D$ with probability $1-\bar \de,$
where $\bar \de  = \frac{\delta}{T}$.
This is achieved using the analysis framework of robust optimization by \cite{bertsimas2011theory}, \cite{ben1998robust}. The safety of each iterate,  combined with a union bound, enables us to prove the safety of the sequence $\{x_t\}_{t=1}^{T}$ with probability $1-\delta = 1- \sum_{t=1}^{T} \bar \de$.
  
Since the LSE's of the constraint parameters are given by  $\hat{\beta}_t^i = [[\hat{a}^i_t]^T, \hat{b}_t^i]^T\in\R^{d+1}$,
the confidence set for the vector of true parameters $\beta^i$ is given by the following ellipsoid:
$\mathcal{E}^i_t(\bar \de) = \left\{z \in R^{d+1} : (\hat{\beta}_t^i - z)^T \Sigma_t^{-1}(\hat{\beta}_t^i - z)\leq \phi^{-1}
(\bar \de)^2\right\}, $
where, 
$$\phi^{-1}(\bar \de) =  \max\left\{
\sqrt{128 d \log N_t \log \left(\frac{N_t^2}{\bar\delta}\right)},
\frac{8}{3}\log \frac{N_t^2}{\bar\delta}\right\}$$ for $\sigma$-sub-Gaussian noise $\eta^i$ for $N_t e^{-1/16} \geq \bar\delta$, \citep{dani2008stochastic}
\footnote{In the case when the noise is Gaussian and $X_t$ is chosen deterministically, e.g. if all the samples are taken around $x_0$, 
 $\phi^{-1}(\bar \de)$ is taken as the inverse of Chi-squared cumulative distribution function with $d+1$ degrees of freedom \citep{draper2014applied}.}.  
Thus, we have an ellipsoidal uncertainty set centered at $\hat{\beta}_t^i$, such that $\Prob\{\beta^i\in \mathcal{E}^i_t(\bar \de)\} \geq 1-\bar \de.$
We define the confidence set $\mathcal{E}_t(\bar \de)$ for all parameters $\beta$  by $\mathcal{E}_t(\bar \de) = \mathcal{E}^1_t(\bar \de/m)\times \ldots \times \mathcal{E}^{m}_t(\bar \de/m)\subseteq \R^{(d+1)m}.$ 
The confidence set $\mathcal{E}_t(\bar \de)$ determines the uncertainty set for constraint parameters $\beta$ with probability $1-\bar \de$. Indeed, $
1- \Prob(\exists i:\, \beta^i \notin \mathcal E^i_t(\bar \de/m)) = 1-\Prob\{\cup_{i=1}^{m} \{\beta^i \notin \mathcal E^i_t(\bar \de/m)\}\} \geq 1-
\sum_{i=1}^{m}\bar\de/m=1-\bar\de.$

We define the safety set $S_t(\bar \de) \subset \R^d$ at iteration $t$ as the set of $x\in\R^d$ satisfying the constraints with any true parameter $\beta = [A,b]$ in the confidence set: 
\begin{align}
\label{eq:safety_set}
S_t(\bar \de) = \{x\in \R^d: Ax\leq b, ~ \forall [A,b]\in \mathcal{E}_t(\bar \de)\}.
\end{align}
Given that for each constraint $\beta^i$ our uncertainty set $\mathcal{E}^i_t(\bar \de)$ has an ellipsoidal form, the safety set $S_t(\bar \de)$ is  equivalent to the intersection of a  set of second order cone constraints \citep{ben2009robust} 
\begin{align}\label{INEQ0}
 S_t(\bar \de) = \Bigg\{ x\in &\R^d
 : \forall i=1,\ldots,m~\left[[\hat{a}^i_t]^T x - \hat{b}_t^i\right]+\nonumber\\&
+\phi^{-1}(\bar \de/m) \left\|
{\Sigma_t^{1/2}}\begin{bmatrix}x\\-1\end{bmatrix}\right\| \leq 0 \Bigg\}.
\end{align}
\begin{fact}\label{lemm:safety}
From the definition of the confidence and safety sets it readily follows that
$$\Prob \{x \in D\,|\, x \in S_{t}(\bar \de), \beta \in \mathcal E_{t}(\bar \de)\}  = 1.$$
\end{fact}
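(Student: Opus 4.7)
The plan is to argue directly from the definitions of $S_t(\bar\de)$, $\mathcal{E}_t(\bar\de)$, and $D$; no probabilistic machinery is actually needed beyond observing that the conclusion holds deterministically once we condition on the two events.

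First, I would unpack the hypothesis $x \in S_t(\bar\de)$. By the definition in \eqref{eq:safety_set}, this means that for \emph{every} parameter $[A,b] \in \mathcal{E}_t(\bar\de)$, the inequality $Ax \leq b$ holds. In particular, the universal quantifier ranges over all members of the confidence set simultaneously. (The equivalent second-order cone representation in \eqref{INEQ0} is not needed here; we work directly with the worst-case characterization.)

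Next, I would apply this universal statement to the specific parameter $\beta = [A,b]$ that generated the true constraints, using the second part of the conditioning event, $\beta \in \mathcal{E}_t(\bar\de)$. Instantiating the ``for all'' at this particular point yields $Ax \leq b$, which is precisely the condition $x \in D$ according to the problem setup in \eqref{problem}.

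Since the implication $\{x \in S_t(\bar\de)\} \cap \{\beta \in \mathcal E_t(\bar\de)\} \Rightarrow \{x \in D\}$ holds as a pure set inclusion, the conditional probability of the consequent given the antecedent is exactly $1$. There is essentially no obstacle to overcome; the only subtlety worth highlighting in the write-up is that the safety set was constructed precisely so that this set inclusion is tautological, and that the probabilistic content of the statement is entirely pushed into verifying $\beta \in \mathcal E_t(\bar\de)$, which is not required here but will be used downstream in Section~\ref{sec:safety} to obtain the high-probability safety guarantee \eqref{eq:safety_iterate} via a union bound over $t$.
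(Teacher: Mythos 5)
Your proposal is correct and matches the paper's (implicit) reasoning exactly: the paper offers no written proof beyond asserting that the claim ``readily follows'' from the definitions, and your argument---instantiating the universal quantifier in the definition \eqref{eq:safety_set} of $S_t(\bar\de)$ at the true parameter $\beta \in \mathcal E_t(\bar\de)$ to conclude $Ax \leq b$, i.e.\ $x \in D$, deterministically---is precisely that intended argument. Nothing further is needed.
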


\begin{fact}~\label{fact:fact1}
The condition $x_t\in S_{t}(\bar\de)$ is equivalent to  \begin{align*}
 \phi_{\bar \de}\sqrt{\frac{1}{N_t}+ 
 (x_t - \bar{x}_t)^TR_t(x_t - \bar{x}_t)} \leq \min_{i=1,\ldots,m}\e_t^i, 
\end{align*}
where $\phi_{\bar \de} = \sigma\phi^{-1}(\bar \de/m)$,  
$\e^i_{t} = \hat{b}_t^i-[\hat{a}_t^i]^Tx_{t}$, $\bar x_t = \frac{X_t^T\mathbf{1} }{N_t}$, and 
$R_t = \big[\sum_{j=1}^{N_t}(x_{ {(j)}} - \bar{x}_t)(x_{(j)} - \bar{x}_t)^T\big]^{-1}.$
\end{fact}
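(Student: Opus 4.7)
The plan is to reduce the safety condition of (\ref{INEQ0}) to the stated form by explicitly inverting the Gram matrix $\bar X_t^T \bar X_t$ via a Schur complement and recognizing $R_t$ as the inverse of the centered scatter matrix of the measurement points.

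First, I would observe that in the definition of $S_t(\bar\de)$ from (\ref{INEQ0}), the quantity $\phi^{-1}(\bar\de/m)\,\bigl\|\Sigma_t^{1/2}[x_t^T,-1]^T\bigr\|$ is independent of the constraint index $i$, while the affine term is exactly $-\e_t^i$. Hence $x_t\in S_t(\bar\de)$ is equivalent to
\[
\phi^{-1}(\bar\de/m)\,\Bigl\|\Sigma_t^{1/2}\bigl[\begin{smallmatrix}x_t\\-1\end{smallmatrix}\bigr]\Bigr\| \;\leq\; \min_{i=1,\dots,m}\e_t^i.
\]
Recalling $\Sigma_t=\sigma^2[\bar X_t^T\bar X_t]^{-1}$, all that remains is to show that
\[
[x_t^T,-1]\,[\bar X_t^T\bar X_t]^{-1}\,[x_t^T,-1]^T \;=\; \tfrac{1}{N_t}+(x_t-\bar x_t)^T R_t(x_t-\bar x_t),
\]
since then the stated inequality with $\phi_{\bar\de}=\sigma\phi^{-1}(\bar\de/m)$ follows by taking square roots.

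To establish this identity I would write the Gram matrix in block form as
\[
\bar X_t^T\bar X_t=\begin{bmatrix} X_t^TX_t & -X_t^T\mathbf{1}\\ -\mathbf{1}^TX_t & N_t\end{bmatrix},
\]
and invert it using the standard block-inverse formula with $D=N_t$ as the pivot. The key observation is that the Schur complement equals
\[
X_t^TX_t-\tfrac{1}{N_t}X_t^T\mathbf{1}\mathbf{1}^TX_t = \sum_{j=1}^{N_t}(x_{(j)}-\bar x_t)(x_{(j)}-\bar x_t)^T = R_t^{-1},
\]
so its inverse is precisely $R_t$. Substituting the resulting blocks into $[x_t^T,-1]\,[\bar X_t^T\bar X_t]^{-1}\,[x_t^T,-1]^T$, using $-X_t^T\mathbf{1}/N_t=-\bar x_t$ and symmetry of the off-diagonal blocks, the expression collapses to $x_t^TR_t x_t - 2x_t^TR_t\bar x_t + \bar x_t^T R_t\bar x_t + 1/N_t$, which is the desired expression after completing the square.

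The calculations are entirely routine block-matrix algebra; the only mild subtlety, and the step I would flag, is verifying that $R_t$ is well-defined, i.e.\ that the Schur complement is invertible. This is guaranteed as soon as the measurement points $x_{(1)},\dots,x_{(N_t)}$ affinely span $\R^d$, which holds by construction: the initial $2d$ queries $x_t\pm\omega_0 e_i$ produce $d$ linearly independent centered differences, so $\sum_j(x_{(j)}-\bar x_t)(x_{(j)}-\bar x_t)^T$ is positive definite for every $t\geq 0$.
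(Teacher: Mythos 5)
Your proposal is correct and follows essentially the same route as the paper's proof: both invert the block Gram matrix $\bar X_t^T\bar X_t$ via the Schur complement with $N_t$ as pivot, identify that complement with the centered scatter matrix $R_t^{-1}$, and expand the quadratic form $[x_t^T,-1][\bar X_t^T\bar X_t]^{-1}[x_t^T,-1]^T$ into $\frac{1}{N_t}+(x_t-\bar x_t)^TR_t(x_t-\bar x_t)$. Your explicit check that $R_t$ is well-defined (via the $2d$ coordinate-direction queries) is a small addition the paper defers to the later bound $R_t^{-1}\succeq \frac{N_t\omega_0^2}{2d}I$, but it does not change the argument.
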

The proof of this fact is provided in \Cref{proof:fact2}.


To state the main result on safety of each iteration, we need to introduce some notation. For the polytope $D\in \R^d$, by an active set $B$ we denote a set 
of indices of 
$d$ linearly independent constraints active in a vertex $V\in\R^d$ of $D$, i.e., $V = V^B = [A^B]^{-1}b^B$. Here, $A^B$ is a corresponding sub-matrix of $A$ and $b^B$ is the corresponding right-hand-side of the constraint. Let $\rho_{\min}(A^B)$ denote the smallest singular value of $A^B$.  Let  $Act(D)$ denote the set of all active sets 
corresponding to vertices of $D$, i.e., $Act(D) = \{B: V^{B} \text{ is a vertex of }D\}$. 
Furthermore, define $\rho_{\min}(D) := \min\{\rho_{\min}(A^B): B\in Act(D)\}$. Note that $\rho_{\min}(D) > 0$ since by definition, $B$ is a set of linearly independent active constraints. 
Let $\e_0 = 
\min_i\{b^i - [a^i]^T x_0\}$, and  $L_A = \max_i\|a^i\|$. 
{With} the notation in place, we can present the following lemma on the lower bound on the number of measurements to ensure safety of each iterate.
\begin{lemma}\label{prop:gamma}
If $\beta\in \mathcal E_{k}(\bar \de)$ for $k= 1,\ldots,t$   
 and $n_t = 4C_n t(\ln t)^2$, with the constant parameter $C_n$ satisfying
\begin{align}
C_n&\geq
 C_{\bar \de}^2\max\left\{
\frac{4 (\ln \ln T)^2L_{A}^2 }{[\e_0]^2},\frac{1}{(\Gamma_0 +1)^2}\right\}, \label{def:cn}
\end{align}
\begin{align}
\text{where } C_{\bar \de} &= \frac{2 \phi_{\bar \de} d (\Gamma_0+1) }{\rho_{\min}(D)}\sqrt{\frac{\Gamma_0^2+1}{\omega_0^2} + 1},\label{def:c}
 \end{align}
then $x_{t}\in S_{t}(\bar \de).$ Furthermore, the total number of measurements then satisfies  $N_t = C_n t^2(\ln t)^2$.
\end{lemma}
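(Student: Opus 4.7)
The plan is to verify the equivalent condition given by Fact~\ref{fact:fact1}, namely
$$\phi_{\bar\de}\sqrt{\tfrac{1}{N_t}+(x_t-\bar x_t)^T R_t(x_t-\bar x_t)}\leq \min_i\e_t^i,$$
by upper bounding the left-hand side and lower bounding the right-hand side, and then choosing $N_t$ large enough that the two estimates combine. Throughout, I would argue by induction on $t$: the inductive hypothesis $x_k\in S_k(\bar\de)$ for $k<t$ combined with Fact~\ref{lemm:safety} (and the hypothesis $\beta\in\mathcal E_k(\bar\de)$) places every prior iterate in $D$, which I use to control norms such as $\|x_t-\bar x_t\|\leq O(\Gamma_0+\omega_0)$.

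The first step is the LHS bound. The measurement pattern is symmetric: at each iteration $k$, $n_k/(2d)$ points are placed at $x_k\pm\omega_0 e_i$ for $i=1,\ldots,d$, so cross-terms in the expansion vanish and one obtains the exact decomposition
$$\sum_j(x_{(j)}-\bar x_t)(x_{(j)}-\bar x_t)^T = \tfrac{N_t\omega_0^2}{d}I+\sum_k n_k(x_k-\bar x_t)(x_k-\bar x_t)^T \succeq \tfrac{N_t\omega_0^2}{d}I.$$
Inverting yields $R_t\preceq \tfrac{d}{N_t\omega_0^2}I$, which, together with $\|x_t-\bar x_t\|\leq 2(\Gamma_0+1)$, gives an upper bound of the form $\phi_{\bar\de}\sqrt{\tfrac{1}{N_t}+\tfrac{4d(\Gamma_0+1)^2}{N_t\omega_0^2}}$, matching the $C_{\bar\de}/\sqrt{N_t}$ expression from \eqref{def:c} up to constants. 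The second step is the RHS bound: writing $\e_t^i=(b^i-[a^i]^Tx_t)+(\hat\beta_t^i-\beta^i)^T[x_t;-1]$ and bounding the second term in the $\Sigma_t^{-1}$-norm, $|(\hat\beta_t^i-\beta^i)^T[x_t;-1]|\leq \phi^{-1}(\bar\de/m)\sqrt{[x_t;-1]^T\Sigma_t[x_t;-1]}$, one recognizes the right-hand side as exactly the LHS of the target inequality. Thus it reduces to showing $2\cdot\text{LHS}\leq b^i-[a^i]^Tx_t$ for every $i$.

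The main obstacle is the third step: lower bounding the true slack $\min_i(b^i-[a^i]^Tx_t)$. Unrolling the FW recursion with $\gamma_k=1/(k+2)$ gives $x_t=\tfrac{1}{t+1}x_0+\tfrac{t}{t+1}\bar s_t$ where $\bar s_t$ is a convex combination of the directions $\hat s_k$. Each $\hat s_k$ is a vertex of $\hat D_k$ and can be compared to the corresponding vertex $V^B=[A^B]^{-1}b^B$ of $D$: by the confidence-set assumption and the definition of $\rho_{\min}(D)$, $\|\hat s_k-V^B\|=O\!\left(\phi_{\bar\de}/(\rho_{\min}(D)\sqrt{N_k})\right)$, so the true slack at $\hat s_k$ is at least $-L_A\cdot O(\phi_{\bar\de}/(\rho_{\min}(D)\sqrt{N_k}))$. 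Propagating through the convex combination yields $\min_i(b^i-[a^i]^Tx_t)\geq \tfrac{\e_0}{t+1}-L_A\cdot O(\phi_{\bar\de}/(\rho_{\min}(D)\sqrt{N_k}))$. Plugging everything into $2\cdot\text{LHS}\leq\min_i(b^i-[a^i]^Tx_t)$ and solving for $N_t$ produces the requirement $N_t\gtrsim t^2(\ln t)^2\cdot C_n$ with $C_n$ of the stated form; the $t^2$ comes from squaring the $1/(t+1)$ slack, one $\ln t$ from $\phi_{\bar\de}=O(\ln N_t)$, and the second $\ln t$ from book-keeping between the multiple error terms. Summing $n_k=4C_n k(\ln k)^2$ over $k\leq t$ then gives $N_t=C_nt^2(\ln t)^2$ (to leading order), completing the inductive step.
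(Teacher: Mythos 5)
Your proof is correct and reaches the same quantitative conclusion, but it takes a genuinely different route for the central step, which is lower-bounding $\min_i\e_t^i$ in the condition of Fact~\ref{fact:fact1}. The paper keeps everything in terms of the \emph{estimated} slack: it bounds $\e_t^i\geq\min_i\la\hat a_t^i,\hat s_t-x_t\ra$ and controls the latter by an induction over iterations (Lemmas~\ref{lemm:3} and \ref{lemm:4}), which requires comparing the DFS vertices of $\hat D_{t-1}$ and $\hat D_t$ at each step and invoking the vertex-estimation bound twice per step. You instead convert the estimated slack to the \emph{true} slack via $\e_t^i=(b^i-[a^i]^Tx_t)-(\hat\beta_t^i-\beta^i)^T[x_t;-1]$ and Cauchy--Schwarz in the $\Sigma_t$-norm, observing that the error term equals the left-hand side of Fact~\ref{fact:fact1} exactly, so the condition reduces to $2\cdot\mathrm{LHS}\leq b^i-[a^i]^Tx_t$; you then compute the true slack in closed form from the unrolled recursion $x_t=\frac{1}{t+1}\bigl(x_0+\sum_{k=0}^{t-1}\hat s_k\bigr)$, using linearity of the constraints and the vertex-estimation bound (\Cref{lemm:1}) only to control $\mathrm{dist}(\hat s_k,D)\leq C_{\bar\de}/\sqrt{N_k}$. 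Both arguments rest on the same quantitative engine (\Cref{lemm:1}) and both produce the $\frac{1}{t+1}\bigl(\e_0-O(C_{\bar\de}L_A\ln\ln t/\sqrt{C_n})\bigr)$ form via $\sum_k 1/(k\ln k)=O(\ln\ln t)$, but yours avoids the cross-iteration induction entirely and is, in my view, cleaner and easier to audit. Two caveats, both shared with the paper rather than specific to you: the correspondence between active sets of $\hat D_k$ and vertices of $D$ is assumed, not proved; and the final constant bookkeeping (that the stated $C_n$ dominates the $N_t$ requirement coming from the LHS bound) relies on rough dominations such as $C_{\bar\de}^2L_A^2\gtrsim\phi_{\bar\de}^2 d(\Gamma_0+1)^2/\omega_0^2$, which you gesture at ("book-keeping") at the same level of precision as the paper does.
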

We provide the full proof in \Cref{proof:lemma1}.
\begin{proofs}Let us give a brief intuition for the proof. 
First, from Fact \ref{fact:fact1} we see that in order to have $x_t \in S_t(\bar\de)$ we require $\min_i\e_t^i \geq \Omega\left( \frac{1}{\sqrt{N_t}}\right)$, where $\e_t^i$ 
 is equal to the distance 
to the boundary   corresponding to the estimated $i$-th constraint  multiplied by $\|\hat a_t^i\|$. 
Second, if these estimates were fixed, then using step sizes $\gamma_t = \frac{1}{t+2}$ we could ensure that the convergence to any boundary $i$ would not be faster than $\e_t^i \geq \prod_{k=0}^t(1-\gamma_k)\e_0^i = \frac{\e_0^i}{t+2}$ (see \Cref{illustration}). 
Hence, we require $N_t \geq \Omega\left(\frac{t^2}{\e_0^2}\right)$. 
However, since  $\e_t^i$ are random and boundaries are fluctuating, we need $N_t$ to be square logarithmic times larger than the above estimate (as shown in the full proof).
\end{proofs}

\begin{figure}[!t]
\centering
\includegraphics[align=c,width=0.48\textwidth]{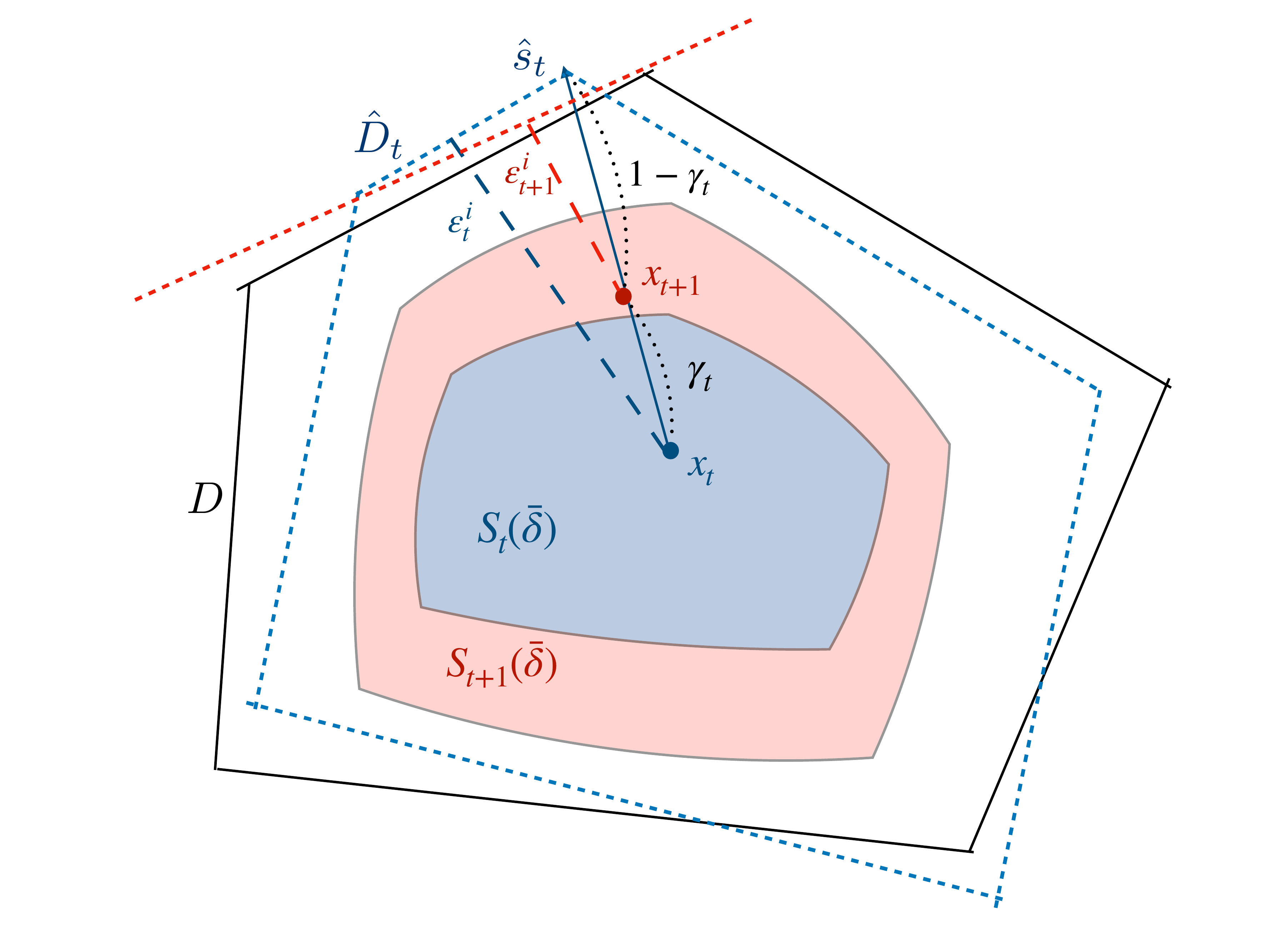}
\caption{Illustration of one iteration of SFW $x_{t+1} = x_t + \gamma_t (\hat s_t - x_t)$. Bold lines  denote the  polytope $D$ and  dashed lines denote its estimate $\hat D_t$. 
}
\label{illustration}
\end{figure}
\textbf{Remark.} Note that  the dependence on $\ln \ln T$ is very mild because 
this term grows extremely slowly, i.e,  $\ln \ln 15 \approx 1$ and $\ln \ln 2000 \approx 2$. 

Having established \Cref{prop:gamma}, we can present the safety guarantee of the SFW algorithm. 
 \begin{theorem}\label{thm:safety}
If $n_t = 4C_n t(\ln t)^2$, where the constant parameter $C_n$ is  defined in (\ref{def:cn})
 then, the sequence of iterates $\{x_t\}_{t=0}^{T}$ of SFW is feasible with probability at least $1-\delta$, that is,
  $\Prob\{x_t\in D \text{ for all } 0 \leq t \leq T\} \geq 1-\de.$
 \end{theorem}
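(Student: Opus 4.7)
The plan is to combine \Cref{prop:gamma}, \Cref{lemm:safety}, and the probabilistic construction of the confidence ellipsoid via a straightforward union bound. Concretely, recall from the discussion preceding the theorem that $\bar\delta = \delta/T$, and that by construction of $\mathcal E_t(\bar\delta) = \mathcal E_t^1(\bar\delta/m)\times\cdots\times\mathcal E_t^m(\bar\delta/m)$, a union bound over the $m$ constraints yields $\Prob\{\beta\in\mathcal E_t(\bar\delta)\}\geq 1-\bar\delta$ for every single $t$.

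First, I would define the "good'' events
\[
\mathcal A_t := \{\beta\in\mathcal E_t(\bar\delta)\},\qquad t=1,\ldots,T,
\]
and let $\mathcal G := \bigcap_{t=1}^{T}\mathcal A_t$. Applying a union bound across the $T$ iterations gives
\[
\Prob(\mathcal G) \;\geq\; 1-\sum_{t=1}^{T}\bar\delta \;=\; 1-T\cdot\frac{\delta}{T} \;=\; 1-\delta.
\]
This is the only place where probability is consumed in the argument, and everything that follows is deterministic on the event $\mathcal G$.

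Second, I would argue that on $\mathcal G$, all iterates are feasible. The base case $x_0\in D$ is given as the algorithm's input. For $t\geq 1$, on $\mathcal G$ we have $\beta\in\mathcal E_k(\bar\delta)$ for all $k\leq t$, which is precisely the hypothesis of \Cref{prop:gamma}. Since the choice $n_t = 4 C_n t(\ln t)^2$ with $C_n$ as in \eqref{def:cn} meets that lemma's requirement, we conclude $x_t\in S_t(\bar\delta)$. \Cref{lemm:safety} then gives $x_t\in D$ with certainty on the joint event that $x_t\in S_t(\bar\delta)$ and $\beta\in\mathcal E_t(\bar\delta)$, which is a subset of $\mathcal G$.

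Combining the two steps, on the event $\mathcal G$ of probability at least $1-\delta$ we have $x_t\in D$ for every $0\leq t\leq T$, proving the claim. The conceptual obstacle was already absorbed into \Cref{prop:gamma} (the delicate part, which controls how fast the iterates can approach the true boundary and converts this into the required growth of $n_t$); what remains here is really only the bookkeeping of the confidence level across $T$ rounds, which is why the choice $\bar\delta = \delta/T$ is made at the outset. One minor point to verify is that the technical requirement $N_t e^{-1/16}\geq\bar\delta$ entering the expression for $\phi^{-1}(\bar\delta)$ is satisfied for all $t\leq T$ under the prescribed schedule $n_t$, which follows immediately from $N_t\geq n_0$ being large enough once $C_n$ is the constant specified in \eqref{def:cn}.
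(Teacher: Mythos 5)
Your proposal is correct and follows essentially the same route as the paper's proof: a union bound over the $m$ constraints and $T$ iterations to bound the probability of the event $\mathcal F_T=\{\beta\in\cap_t\mathcal E_t(\bar\de)\}$ (your $\mathcal G$), then \Cref{prop:gamma} to deduce $x_t\in S_t(\bar\de)$ on that event, and \Cref{lemm:safety} to conclude $x_t\in D$. The only cosmetic differences are your explicit treatment of the base case $x_0\in D$ and the side remark about the condition $N_te^{-1/16}\geq\bar\de$, both of which the paper leaves implicit.
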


\begin{proof}
Let $\mathcal{F}_t$ denote
the event that all the estimated confidence sets $\mathcal E_k(\bar \de)$ up to step $t$ cover  $\beta$, i.e., $\mathcal{F}_t = \{\beta \in \cap_{k=0}^{t} \mathcal E_{k}(\bar \de)\}.$
Furthermore, let $\mathcal{Q}_t$ denote the event that all the $x_k \in S_k(\bar \de)$ up to iteration $t$, i.e, 
$\mathcal{Q}_t = \{x_k \in S_k(\bar \de) \text{ for all } 0 \leq k \leq t\}.$
By \Cref{prop:gamma} if $\F_t$ holds, then $x_t\in S_t(\bar \de).$
Hence, it is easy to see that $\F_t$ implies $\mathcal Q_t$, i.e., $\Prob \{\mathcal Q_t|\F_t\} = 1.$ Thus, using Fact 1 we derive
\begin{align*}
    \Prob &\left\{x_{t} \in D \text{ for all } 0 \leq t \leq T\right\} =\Prob \{ \mathcal{Q}_{T},\mathcal{F}_{T}\}  =
    \Prob\{\mathcal{F}_{T}\}. 
\end{align*}
Using Boole's inequality we can bound the probability of $\F_T$ as follows
\begin{align*}
\Prob\{\mathcal{F}_{T}\} &= 
1-\Prob\{\cup_{t=0}^{T}\cup_{i=1}^{m} \{\beta^i \notin \mathcal E^i_t(\bar \de/m)\}\}\\& \geq 1-
\sum_{t=0}^{T} \sum_{i=1}^{m}\bar \de/m\geq 1-T \bar \de.
\end{align*} This concludes the proof.
\end{proof}

\section{CONVERGENCE}\label{sec:convergence}
First, we show that the proposed algorithm achieves the optimal convergence rate for the Frank-Wolfe algorithm (see \cite{lan2013complexity})
, with sufficiently high probability. Second, we discuss extensions to stochastic first-order and zeroth-order oracles  of the objective function based on the results of \cite{hazan2016variance}.
\subsection{Convergence rate.}
Let us define the curvature constant $C_f$ of the function $f(x)$ with respect to the compact domain $D$ by $$C_f = \sup_{ \substack{x,s \in D,
 \gamma \in [0,1],\\  y = x+\gamma(s-x)}} \frac{1}{\gamma^2} (f(y) - f(x) - \la y-x, \nabla f(x)\ra). $$
It can be verified that $C_f \leq L\Gamma^2$, where $L$ is the 
Lipschitz constant of the gradient $\nabla f(x)$ and $\Gamma$ is the diameter of the set $D$ (see \Cref{sec:problem}). Our main result is as follows.
\begin{theorem}\label{thm:conv}
If $n_t = 4C_n {(t+2)(\ln (t+2)})^2$ and $C_n$ is chosen according to the bound in Equation (\ref{def:cn}), then:\\
a) after $T$ steps of the SFW algorithm, the final point $x_T$ satisfies
\begin{align*}\Prob \Bigg\{&f(x_T) - f(x_*)  \leq \frac{f(x_0) - f(x_*)}{T+2}+\\& +\frac{\ln {(T+2)} \frac{C_f}{2} +\ln \ln (T+2)\frac{C'}{2}}{T+2} 
\Bigg\} \geq 1 - \de,\end{align*}
where 
$C' = \frac{M C_{\bar \de}}{\sqrt{C_n}} $, and $C_{\bar \de}$ is defined in (\ref{def:c}). \\
b) all the iterates $\{x_t\}_{t=1}^T$ are feasible with probability $1-\de$, as required in (\ref{eq:safety_iterate}).
\end{theorem}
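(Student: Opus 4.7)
The plan for part (b) is immediate: it is exactly the statement of Theorem \ref{thm:safety}, since the sample schedule $n_t = 4C_n(t+2)(\ln(t+2))^2$ prescribed here only strengthens the one required there. For part (a) my plan is to superimpose the classical Frank-Wolfe analysis on a high-probability perturbation bound for the DFS gap error $E_t := |\hat{g}_t - g_t|$, accumulated over the $T$ iterations.

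The first step is a one-step descent inequality. Using the definition of $C_f$ on the update $x_{t+1} = x_t + \gamma_t(\hat{s}_t - x_t)$, I get
\[
f(x_{t+1}) \leq f(x_t) + \gamma_t\langle \hat{s}_t - x_t, \nabla f(x_t)\rangle + \tfrac{\gamma_t^2}{2} C_f.
\]
Splitting $\langle \hat{s}_t - x_t, \nabla f(x_t)\rangle = (g_t - \langle x_t,\nabla f(x_t)\rangle) + (\hat{g}_t - g_t)$ and applying convexity together with optimality of $s_t$ over $D$ (so that $g_t - \langle x_t,\nabla f(x_t)\rangle \leq -h_t$ for $h_t := f(x_t)-f(x_*)$) yields the familiar recursion
\[
h_{t+1} \leq (1-\gamma_t)h_t + \gamma_t E_t + \tfrac{\gamma_t^2}{2}C_f.
\]
With $\gamma_t = 1/(t+2)$ the substitution $a_t = (t+1)h_t$ converts this to $a_{t+1} \leq a_t + E_t + \tfrac{C_f}{2(t+2)}$, so telescoping from $t=0$ to $T-1$ and using $\sum_{t=0}^{T-1}\tfrac{1}{t+2} \leq \ln(T+2)$ gives
\[
h_T \leq \tfrac{h_0}{T+2} + \tfrac{1}{T+2}\sum_{t=0}^{T-1} E_t + \tfrac{C_f\ln(T+2)}{2(T+2)},
\]
up to a harmless reindexing shift.

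It remains to bound $\sum_{t=0}^{T-1} E_t$ by $C'\ln\ln(T+2)/2$ on a high-probability event. I would condition on $\mathcal{F}_T = \{\beta \in \cap_{t=0}^{T}\mathcal{E}_t(\bar\delta)\}$, which by the same union-bound argument used in Theorem \ref{thm:safety} has probability at least $1-\delta$. On this event the ellipsoidal confidence set forces $(\hat{A}_t,\hat{b}_t)$ close to $(A,b)$, and \Cref{prop:dfs} should give a perturbation bound of the shape $E_t \leq M\cdot C_{\bar\delta}/\sqrt{N_t}$, with $M$ controlling $\|\nabla f\|$ and $C_{\bar\delta}$ absorbing the ellipsoid width $\phi^{-1}(\bar\delta/m)$ as in \eqref{def:c}. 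Substituting $N_t = C_n(t+2)^2(\ln(t+2))^2$ then yields $E_t \leq \tfrac{MC_{\bar\delta}}{\sqrt{C_n}}\cdot\tfrac{1}{(t+2)\ln(t+2)}$, and the elementary estimate $\sum_{t\geq 2}\tfrac{1}{t\ln t}\leq \ln\ln T + O(1)$ produces exactly the $C'\ln\ln(T+2)/2$ contribution with $C' = MC_{\bar\delta}/\sqrt{C_n}$. The hard part will be justifying the perturbation bound itself: even when $\hat{A}_t\approx A$ entry-wise, the minimizers of the estimated and true DFS can sit at different vertices, so the argument must go through the second-order-cone reformulation \eqref{INEQ0} together with the geometric spread of the $2d$ probe points (which lower-bounds the eigenvalues of $\bar{X}_t^T\bar{X}_t$ by $\Omega(N_t\omega_0^2)$) to extract the $1/\sqrt{N_t}$ decay used above.
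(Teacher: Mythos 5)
Your proposal is correct and follows essentially the same route as the paper's proof: the standard Frank--Wolfe descent recursion with an inexact linear-minimization oracle, the error $E_t$ controlled by \Cref{prop:dfs} on the event $\mathcal F_T$ (union bound as in \Cref{thm:safety}), and the sum $\sum_t \frac{1}{(t+2)\ln(t+2)}$ yielding the $\ln\ln(T+2)$ term, with part (b) deferred to \Cref{thm:safety}. The only differences are cosmetic --- you telescope via $a_t=(t+1)h_t$ where the paper unrolls the product $\prod_i(1-\gamma_i)$ and phrases the oracle error through the $\mu_t$ parametrization of Jaggi's Lemma~5, and the ``hard part'' you flag is exactly what the paper's \Cref{lemm:1} (vertex perturbation via the $\Omega(N_t\omega_0^2/d)$ eigenvalue bound on $\bar X_t^T\bar X_t$) supplies.
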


\begin{corollary}\label{cor:conv}
The SFW algorithm achieves an $\epsilon$-accurate solution with probability greater than $ 1 - \delta$ after making   $\tilde O(\frac{1}{\epsilon})$ linear optimization oracle calls
 and $\tilde O\left(\max\left\{\frac{d^3 \ln\frac{1}{\bar \de}}{\epsilon^2}, \frac{d^2 \ln^2\frac{1}{\bar \de}}{\epsilon^2}\right\}\right)$ zeroth-order inexact constraint oracle  calls. \end{corollary}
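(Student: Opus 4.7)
My plan is to deduce Corollary 1 directly from Theorem 2 together with the sampling schedule $n_t = 4C_n(t+2)(\ln(t+2))^2$, by (i) determining how many outer iterations $T$ are needed to drive the right-hand side of the suboptimality bound below $\epsilon$, and (ii) summing the per-iteration measurement budget over these iterations and then expanding the explicit form of $C_n$. Part (b) of the corollary, the feasibility claim, is inherited verbatim from Theorem~\ref{thm:safety} (equivalently Theorem~\ref{thm:conv}(b)) under the same choice of $C_n$.

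For (i), the bound in Theorem~\ref{thm:conv}(a) has three contributions of orders $O(1/T)$, $O(C_f \ln T / T)$, and $O(C' \ln\ln T / T)$; the dominant term is $O(C_f \ln T/T)$. Setting this $\leq \epsilon$ and inverting $T/\ln T = O(C_f/\epsilon)$ yields $T = \tilde O(C_f/\epsilon) = \tilde O(1/\epsilon)$, where the tilde absorbs logarithmic factors in $1/\epsilon$ and in the problem constants. Since SFW invokes the linear optimization oracle exactly once per outer iteration (to solve the DFS~(\ref{dfs})), the number of LP oracle calls is $T = \tilde O(1/\epsilon)$.

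For (ii), Lemma~\ref{prop:gamma} gives $N_T = C_n T^2 (\ln T)^2$ total constraint measurements after $T$ iterations. Substituting $T = \tilde O(1/\epsilon)$ yields $N_T = \tilde O(C_n/\epsilon^2)$, so the task reduces to bounding $C_n$. Treating $\rho_{\min}(D)$, $\Gamma_0$, $\omega_0$, $L_A$, $\epsilon_0$, $\sigma$, $M$ as problem-dependent constants and using~(\ref{def:cn})--(\ref{def:c}), we have $C_n = \tilde O(d^2 \phi_{\bar\de}^2)$, with the $\ln\ln T$ factor from~(\ref{def:cn}) contributing only a mild polylogarithmic term. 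The two branches of the maximum defining $\phi^{-1}(\bar\de/m)$ in Section~\ref{sec:safety} give either $\phi_{\bar\de}^2 = O(d \ln(1/\bar\de))$, yielding $C_n = \tilde O(d^3 \ln(1/\bar\de))$, or $\phi_{\bar\de}^2 = O(\ln^2(1/\bar\de))$, yielding $C_n = \tilde O(d^2 \ln^2(1/\bar\de))$. Taking the worst case and multiplying by $1/\epsilon^2$ produces exactly the claimed bound $\tilde O(\max\{d^3 \ln(1/\bar\de)/\epsilon^2,\; d^2 \ln^2(1/\bar\de)/\epsilon^2\})$.

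The only real bookkeeping hazard, and what I would treat most carefully, is the circular dependence of logarithmic factors: $\phi^{-1}$ itself depends on $N_t$ (hence on $T$), and $C_n$ depends on $\ln\ln T$. The clean way to handle this is to first fix $T = \tilde O(1/\epsilon)$ from the dominant $C_f \ln T/T$ term, then substitute $T$ back into $\phi^{-1}$ and $C_n$, and observe that every resulting factor is $\mathrm{polylog}(1/\epsilon,d,1/\bar\de)$ and therefore absorbed by $\tilde O(\cdot)$. No additional analytic content beyond Theorem~\ref{thm:conv} and Lemma~\ref{prop:gamma} is required.
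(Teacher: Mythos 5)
Your proposal is correct and follows essentially the same route as the paper: invert the Theorem~\ref{thm:conv} bound to get $T=\tilde O(1/\epsilon)$ LP-oracle calls, then write $N_T=\tilde O(C_n T^2(\ln T)^2)=\tilde O(d^2\phi_{\bar\de}^2/\epsilon^2)$ and expand the two branches of $\phi^{-1}(\bar\de/m)$ to obtain the stated maximum. Your explicit treatment of the $\phi^{-1}$-depends-on-$N_t$ circularity (fix $T$ first, substitute back, absorb into $\tilde O$) is the same bookkeeping the paper performs implicitly.
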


Below, we provide the proof sketch for \Cref{thm:conv}. The full proofs of \Cref{thm:conv} and \Cref{cor:conv} are provided in \Cref{proof:thm2}. 
\begin{proofs}

Our proof is based on the extensive study of FW convergence provided by \cite{jaggi2013revisiting}, \cite{freund2016new}. 
Recall that $E_t$ is the accuracy with which an approximated DFS at iteration $t$ is solved. 
Similarly to (\cite{freund2016new}, Theorem 5.1)  we can show that for $\gamma_t = O\left(\frac{1}{t}\right)$, we have 
\begin{align}\label{freund}
f(x_t)-f(x_*)\leq O\left( \frac{\epsilon_0 + C_f\ln t 
+ \sum_{t=1}^{T} E_t}{t}\right),
\end{align}
where $\epsilon_0 = f(x_0) - f(x_*)$.
Hence, to prove the convergence rate of the SFW algorithm we need to show  that the error in the DFS solution decreases with the rate  $O\left(\frac{1}{t}\right)$. This fact is shown in \Cref{prop:dfs} below. 


\begin{proposition}\label{prop:dfs}
If $\beta \in \mathcal E_t(\bar \de)$ and $N_t \geq \frac{C_{\bar\de}^2}{(\Gamma_0+1)^2},$ then 
$E_t \leq \frac{M C_{\bar \de}}{\sqrt{N_t}}.$ Since $\Prob \{\beta \in \mathcal E_t(\bar \de)\}\geq 1-\bar\de$, we obtain 
$\Prob \left\{E_t \leq \frac{M C_{\bar \de}}{\sqrt{N_t}}\right\}\geq 1-\bar\de.$
\end{proposition}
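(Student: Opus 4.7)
The plan is to bound the gap-estimation error $E_t = |\hat g_t - g_t|$ by first translating it into a set-distance between $D$ and $\hat D_t$, then controlling that distance via the confidence ellipsoid and the measurement geometry. Since $f$ is $M$-Lipschitz, the linear objective $s\mapsto\langle s,\nabla f(x_t)\rangle$ has Lipschitz constant at most $M$. For any $s\in\hat D_t$, projecting to the nearest point of $D$ gives $\langle s,\nabla f(x_t)\rangle \geq g_t - M\,\mathrm{dist}(s,D)$, and symmetrically for $s\in D$ and $\hat D_t$, so
\[E_t \;\leq\; M \max\{\mathrm{dist}(\hat s_t, D),\, \mathrm{dist}(s_t, \hat D_t)\}.\]
It therefore suffices to bound each projection distance.

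Next I would bound these projection distances via a Hoffman-style inequality and the confidence ellipsoid. On the event $\beta\in\mathcal E_t(\bar\de)$, Cauchy--Schwarz applied to $(\hat\beta^i_t-\beta^i)^T\Sigma_t^{-1}(\hat\beta^i_t-\beta^i)\leq[\phi^{-1}(\bar\de/m)]^2$ yields
\[\bigl|\langle\hat\beta_t^i-\beta^i,\,[s;-1]\rangle\bigr| \;\leq\; \phi^{-1}(\bar\de/m)\,\|\Sigma_t^{1/2}[s;-1]\|\]
for every $i$ and $s$. Thus at $s_t\in D$ the estimated constraints are violated by at most $(\phi_{\bar\de}/\sigma)\,\|\Sigma_t^{1/2}[s_t;-1]\|$, and at $\hat s_t\in\hat D_t$ the true constraints are violated by at most the analogous quantity. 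Converting this violation to Euclidean distance uses the vertex conditioning of the polytope: since the LP optimum is attained at some vertex $V^B = [A^B]^{-1}b^B$ with $\rho_{\min}(A^B)\geq\rho_{\min}(D)$, a Hoffman bound on $\hat D_t$ (equivalently, matrix perturbation of the system $A^B V = b^B$) gives $\mathrm{dist}(s_t,\hat D_t) \leq \frac{d(\Gamma_0+1)}{\rho_{\min}(D)}\cdot(\phi_{\bar\de}/\sigma)\max_i\|\Sigma_t^{1/2}[s_t;-1]\|$, and symmetrically in the reverse direction. This is the step that produces the $\rho_{\min}(D)$ factor in $C_{\bar\de}$.

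Finally I would combine these bounds with Fact~2 and the explicit measurement geometry. Because $2d$ symmetric points at distance $\omega_0$ are taken around every iterate, $R_t\preceq\frac{d}{N_t\omega_0^2}I$, so that for every $s$ with $\|s\|\leq\Gamma_0$ Fact~2 yields
\[\|\Sigma_t^{1/2}[s;-1]\| \;\leq\; \tfrac{\sigma}{\sqrt{N_t}}\sqrt{\tfrac{\Gamma_0^2+1}{\omega_0^2}+1}.\]
Substituting into the distance bound, collecting the constants, and using $\|\nabla f\|\leq M$ reproduces exactly the definition of $C_{\bar\de}$ in (\ref{def:c}) and gives $E_t\leq MC_{\bar\de}/\sqrt{N_t}$. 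The hypothesis $N_t\geq C_{\bar\de}^2/(\Gamma_0+1)^2$ is what guarantees that the projection distances stay within the ball of radius $\Gamma_0+1$, so the uniform estimate on $\|\Sigma_t^{1/2}[s;-1]\|$ remains valid throughout.

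The main obstacle is the Hoffman/vertex-perturbation step: if the optimal active set changes when passing from $D$ to $\hat D_t$, a direct matrix-perturbation argument on $A^B V = b^B$ does not immediately apply, so one must either invoke Hoffman's lemma uniformly over the finite collection of active sets $Act(D)$, or show that for $N_t$ above the stated threshold the optimal active set at the LP vertex is preserved. Either route produces the $d(\Gamma_0+1)/\rho_{\min}(D)$ factor that appears in $C_{\bar\de}$.
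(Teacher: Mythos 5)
Your opening reduction is exactly the paper's: you sandwich $g_t$ between $\hat g_t \pm M\cdot\max\{\mathrm{dist}(\hat s_t,D),\mathrm{dist}(s_t,\hat D_t)\}$ using projections onto the two polytopes and the optimality of $\hat s_t$ and $s_t$ on their respective sets, and your final step (bounding $\|\Sigma_t^{1/2}[s;-1]\|$ by $O(\sqrt{d/(N_t\omega_0^2)})$ via the $2d$ symmetric probe points, modulo a $\sqrt d$ you drop and reabsorb elsewhere) is also the paper's inequality (\ref{sigma}). Where you genuinely diverge is the middle step. The paper does not pass through constraint violations and a Hoffman bound at all: it proves \Cref{lemm:1}, an explicit perturbation bound $\|\check V^B_t - V^B\|\le C_{\bar\de}/\sqrt{N_t}$ on every vertex estimate, obtained by expanding $[A^B+G_t]^{-1}(b^B+\zeta_t)-[A^B]^{-1}b^B$ and bounding $\|G_t\|,\|\zeta_t\|$ via the ellipsoid and $\|[A^B]^{-1}\|\le 1/\rho_{\min}(D)$ (the hypothesis $N_t\ge C_{\bar\de}^2/(\Gamma_0+1)^2$ is used there to force $\|[A^B]^{-1}G_t\|\le 1/2$ so that $\|(I+[A^B]^{-1}G_t)^{-1}\|\le 2$ --- not, as you suggest, to keep projections in a ball). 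It then argues that $\hat D_t$ lies in the convex hull of the $\check V^B_t$ and $D$ in the convex hull of the corresponding true vertices, so both one-sided set distances are bounded by the worst vertex error.

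The gap in your version is the one you name yourself: the constant $\tfrac{d(\Gamma_0+1)}{\rho_{\min}(D)}$ is not, as written, a Hoffman constant for the \emph{perturbed} polytope $\hat D_t$. Hoffman constants are governed by the conditioning of the relevant basic submatrices of $\hat A_t$ (and are not continuous in the constraint matrix in general), whereas $\rho_{\min}(D)$ is defined only over the active sets of the \emph{true} polytope; bridging the two requires exactly the perturbation control that the paper packages into \Cref{lemm:1}, or an active-set preservation argument you do not supply. So your route is workable but incomplete at its load-bearing step. It is fair to add that the paper's own treatment of the same difficulty --- the assertion that $\hat D_t\subseteq\mathrm{conv}\{\check V^B_t : B\in Act(D)\}$ --- is itself stated without a full proof, so either route ultimately needs the same missing ingredient: a guarantee that for $N_t$ above the stated threshold the combinatorial structure (which index sets define vertices) does not change between $D$ and $\hat D_t$.
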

We provide the proof in  \Cref{proof:dfs}.
From the result above it directly follows that $E_t = O\left(\frac{1}{t \ln t}\right)$, hence $\sum_{k=0}^tE_k = O\left(\ln\ln t\right)$. Using this result, and the classical FW proof technique (\cite{freund2016new}) we can conclude the result of \Cref{thm:conv}. This concludes the proof sketch.
\end{proofs}

We can see that under our choice of the number of measurements $n_t$ at each iteration, the total number of measurements  
is $O\left({d^2}t^2\ln t^2\right)$. It follows that the required number of measurements  {at} each step grows almost linearly with  the iteration number and quadratically with the dimension $d$. Note however that the number of iterations is independent of the dimension $d$. In contrast, the safe learning approach in \citep{pmlr-v37-sui15} is based on gridding the decision space and hence, the dependence in $d$ is exponential.  Hence, compared to previous safe learning approaches \citep{pmlr-v37-sui15, berkenkamp2016bayesian}, our method scales better with dimension. Naturally, this scalability is due to the assumption of convexity of the cost function and the linearity of the constraints.

{Finally, let us clarify some computational complexity aspects.
After adding each new data point to $X$, the matrix inversion $(X^TX)^{-1}$ in Step 6 can be performed using one-rank updates (e.g., using formula $(A+vv^T)^{-1} = A^{-1} - \frac{1}{(1+v^TA^{-1}v)}(A^{-1}vv^TA^{-1}))$. The cost of each such operation is $O(d^2)$. This operation is to be made $N_t = O(d^2 t^2 (\ln t)^2)$ times. The total computational complexity is thus $O(d^2N_t) = O(d^4t^2 (\ln t)^2)$ from Step 6 and additionally $t$ LP oracle calls in Step 7.}

\subsection{Extension to stochastic oracle for the objective function.}

Notice that the SFW algorithm requires $t = \tilde O\left(\frac{1}{\epsilon}
\right)$ iterations and $N_t = \tilde O\left(\frac{d^2\ln \frac{1}{\de}}{\epsilon^2}
\right)$ measurements of constraints to obtain a required accuracy of $\epsilon$. General Stochastic Frank-Wolfe algorithm with stochastic objective but known linear constraints require $t = O(\frac{1}{\epsilon})$ iterations and in contrast, $t = O(\frac{1}{\epsilon^3})$ stochastic gradient measurements 
{(\cite{hazan2016variance}, Table 2).}\footnote{The projection-free scheme for stochastic optimization proposed in \citep{lan2016conditional} achieves $N_t = O\left(1/\epsilon^2\right)$ measurements in total, but their method significantly differs from the original FW and the one used in the current paper.} This difference in the number of measurements is due to the fact that in the absence of linearity of the objective function, the gradients of the objective function are changing in each iteration. Thus,  $O(t^2)$ measurements at each iteration are needed to guarantee correct variance reduction rate of the Frank-Wolfe method (see (\ref{freund})).
From the above observation, we can extend the SFW analysis to the case in which we have access to a stochastic first-order oracle of the objective function. In this case, a total of $O(t^3)$ calls to the objective function oracle, and $O(t^2)$ calls to the constraints oracle are sufficient to obtain the desired rate of decrease of $E_t$ in \Cref{prop:dfs} and hence, the convergence rate 
in \Cref{thm:conv}. Similarly, for the case of zeroth-order oracle $O(d^2t^3)$ calls are needed to estimate the gradient of the objective.  The noisy gradient  does not influence the safety of the proposed algorithm.
Thus, the safety results in \Cref{thm:safety} extend to the case with stochastic first-order or zeroth-order oracle of the objective. 
\section{EXPERIMENTS}\label{sec:simulations}
We evaluate the performance of the proposed approach experimentally. In the first experiment, we consider the convergence rate of the algorithm as a function of the dimension. In the second experiment, we compare the SFW algorithm with a robust optimization based approach, which first learns the uncertain constraints and then finds the optimum with respect to the estimated constraints.  
{We consider the  convex smooth optimization problem:{
\begin{align*}
 \min_{x\in D }  \frac{1}{2}  \left\|x-x'\right\|^2_2,
\end{align*}} where  {$D = \left\{x\in \R^d
    |
   -1 \leq x^i \leq 1,
   i = 1,\ldots, d\right\}$} and $x' = [2,0.5,\ldots,0.5]\in \R^d$ for varying dimension $d$. 
   Then, the solution $x_*$ is a point on the boundary of the true constraint set above.  We set the variance of the noise to $\sigma = 0.01$ and use a constant exploration radius $\omega_0 = 0.01$. Furthermore, we set the confidence parameter  $ \de = 0.1$ and the total number of iterations to $T = 15$. 

\paragraph{Empirical constraint violation and convergence rate.}
The first experiment evaluates the empirical convergence rate and constraint violation as a function of the dimension  $d$. First, we evaluate the convergence rate assuming we can obtain the required lower bound on $C_n$ as per \Cref{thm:conv}.  We run the algorithm for dimensions $d = 2, 10, 20$. In particular, the parameters of the problem required for obtaining the lower bound are derived based on knowledge of the constraints and the objective function, as well as the input parameters $\de$, $T$ as follows: $\e_0 = 1,\bar \de = 0.0067, \phi_{\bar \de} = 3.43$, $\|b\|=2, \|a\|=1.$ It follows that a value of $C_n = d^2\cdot 24 $ achieves the required number of measurements. The SFW proposed in Algorithm 1 is  then run with the above choices of parameters $\bar \de$ and $n_t$. 
{For} each dimension, we run the SFW algorithm 20 times, keeping all the parameters and the initial conditions the same. The difference in each experiment is due to the stochastic noise in the measurements.  The average and standard deviation of the function values $f(x_t) - f(x_*)$ scaled by the initial condition error are shown in \Cref{plots6}. It can be seen that the dimension does not influence the convergence rate, rather, it influences only the number of measurements.

We also run the experiment assuming we cannot compute the lower bound $C_n$ precisely due to lack of problem data. In this case, at each iteration we first take $2dt$ measurements and further continuously take new measurements around $x_t$ until the safety set $S_{t+1}({\bar\de})$ grows sufficiently to ensure $x_{t+1}$ becomes safe (see 
Fact \ref{fact:fact1}). This safety indeed 
verifies the feasibility of iterates with high probability based on Fact \ref{lemm:safety}. Let us refer to this as the adaptive variant of SFW. 
This adaptive 
approach is not only more practical due to lack of dependence on problem data, but also requires far fewer measurements in total since the bound on $n_t$ from the \Cref{thm:safety} is quite conservative. This is due to the fact that our theoretical bounds were derived using the worst-case estimate of $\|\Sigma_t^{1/2}\|$, 
when the measurements are taken always around the same point. 
 However, $\|\Sigma_t^{1/2}\|$ can reduce much faster in practice. The convergence will also hold since the bound on $E_t \leq \frac{C_{\bar\de}M}{\sqrt{N_t}}\leq \frac{C_{\bar\de}M}{t+2} = O\left(\frac{1}{t}\right)$ required for the convergence rate is still satisfied. 
 {In} \Cref{plots6} caption we reported the required total number of measurements up to step $T$ of the adaptive variant by $N_T^a$. You can see that it has significantly reduced compared to the non-adaptive variant.

\begin{figure*}[ht]
\centering
\begin{tabular}{ccc}
\includegraphics[width = 0.28\textwidth]{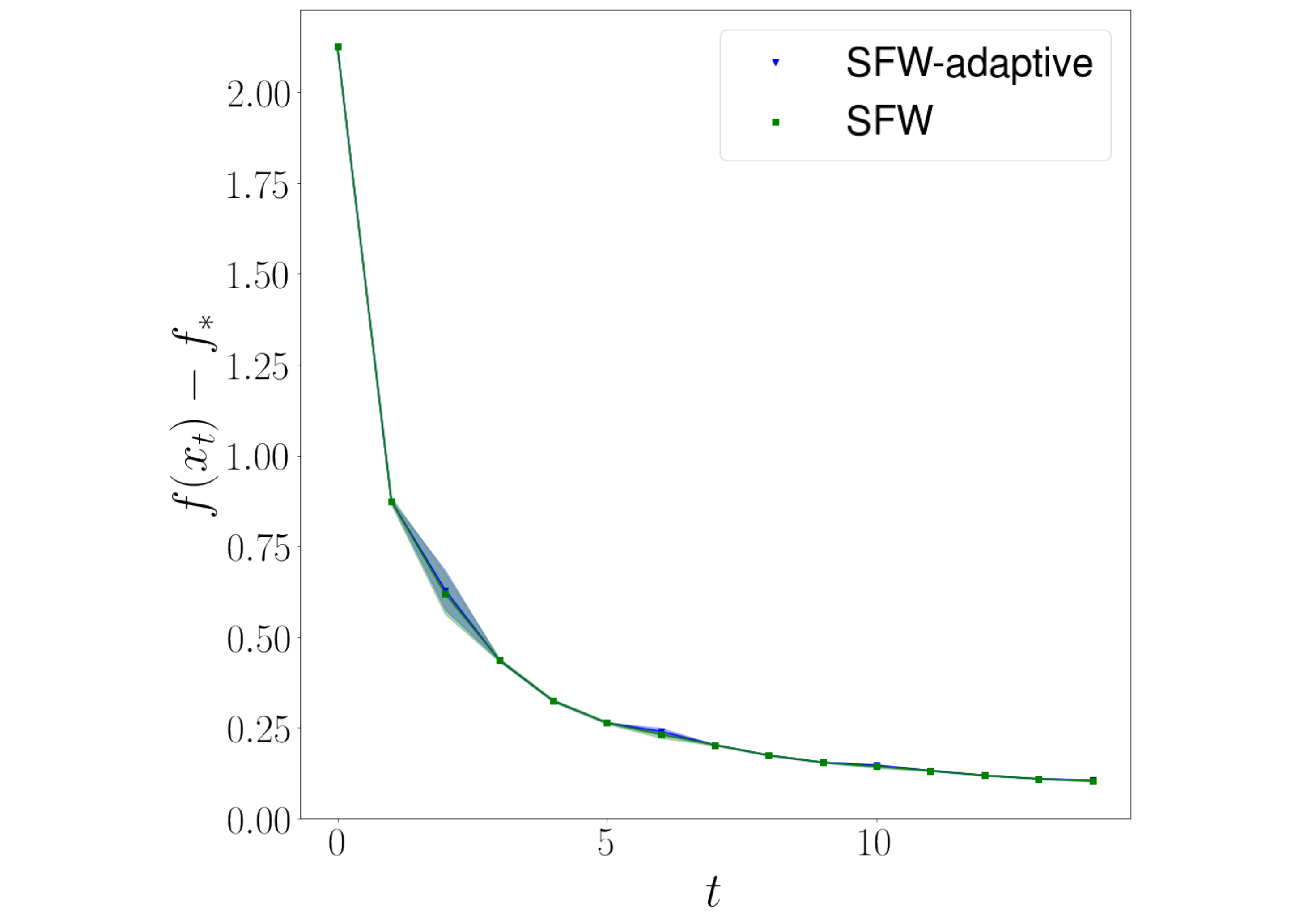}
&\includegraphics[width =0.28\textwidth]{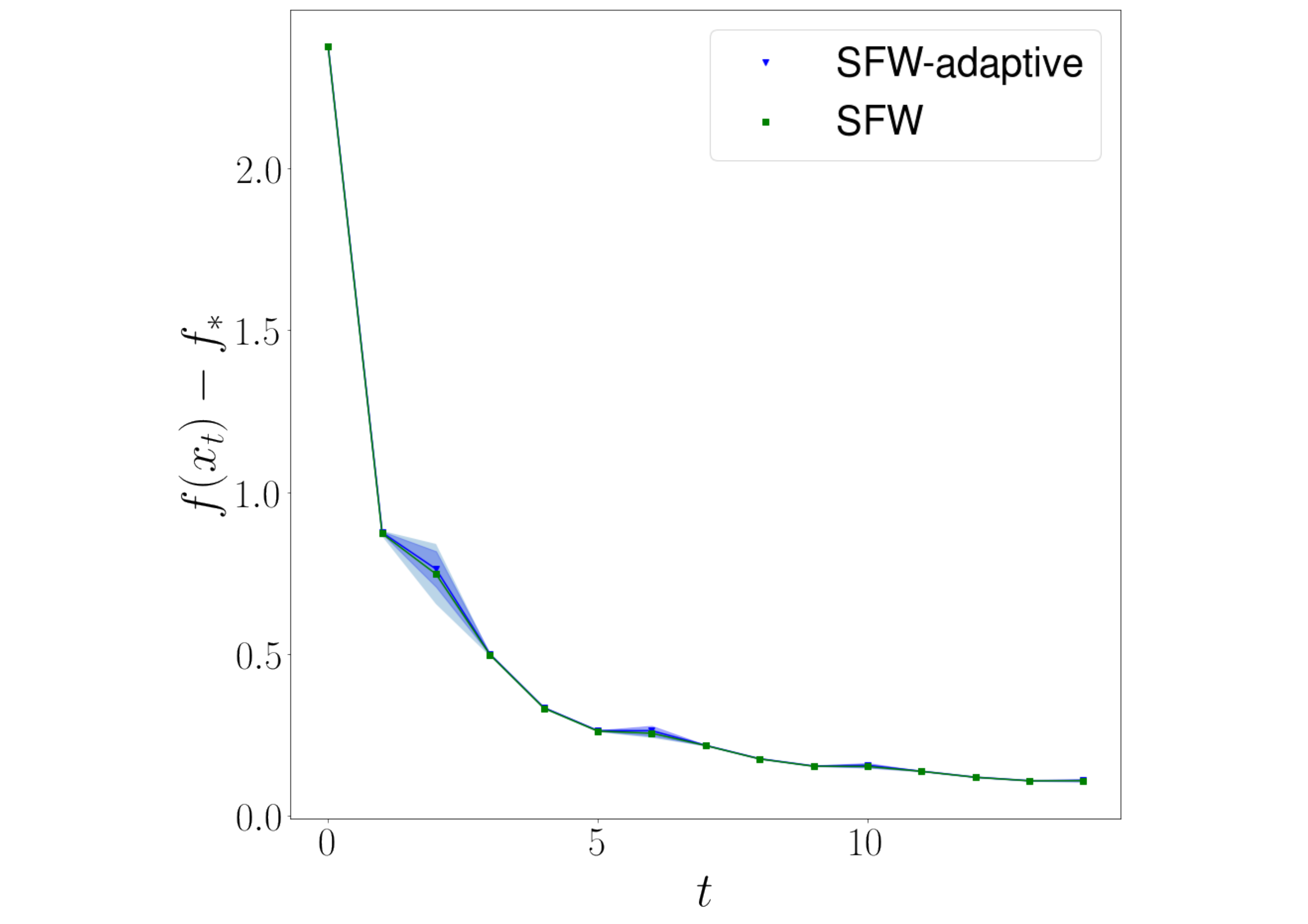}
&\includegraphics[width =0.28\textwidth]{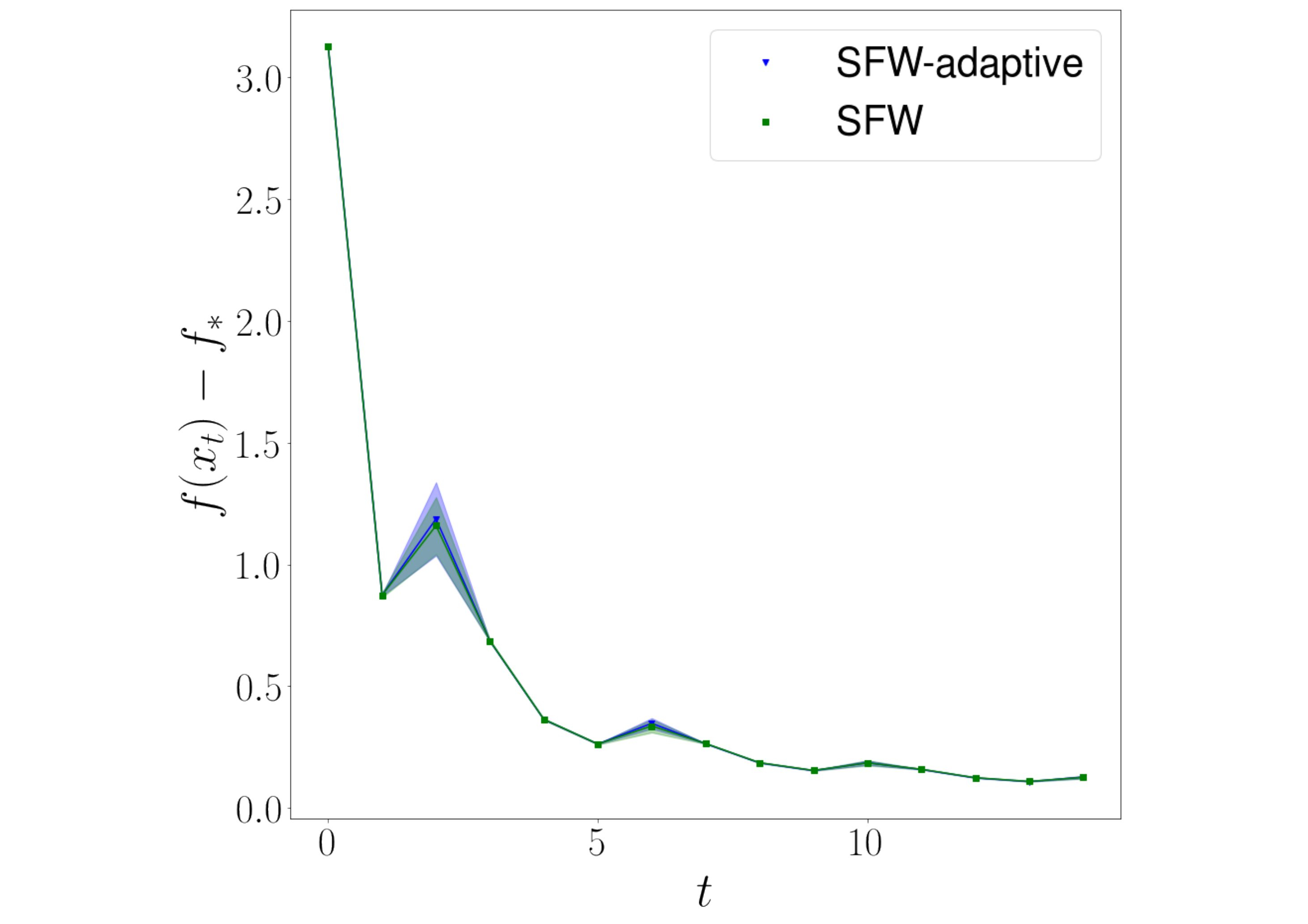}\\
(a) $d = 2,N_{T} = 8396, N_{T}^a = 519 $ & (b) $d = 4, N_T = 16792, N_T^a = 1135$ & (c) $d = 10, N_T = 41980, N_T^a = 4275$
\end{tabular}
\captionof{figure}{Convergence rate of SFW method for the dimensions $d=2,4,10$ with  $T = 15$.}
\label{plots6}
\end{figure*}


\paragraph{Comparison with an alternative robust optimization approach.}

\begin{figure*}[ht]
\centering
 \begin{tabular}{ccc}
\includegraphics[width = 0.28\textwidth]{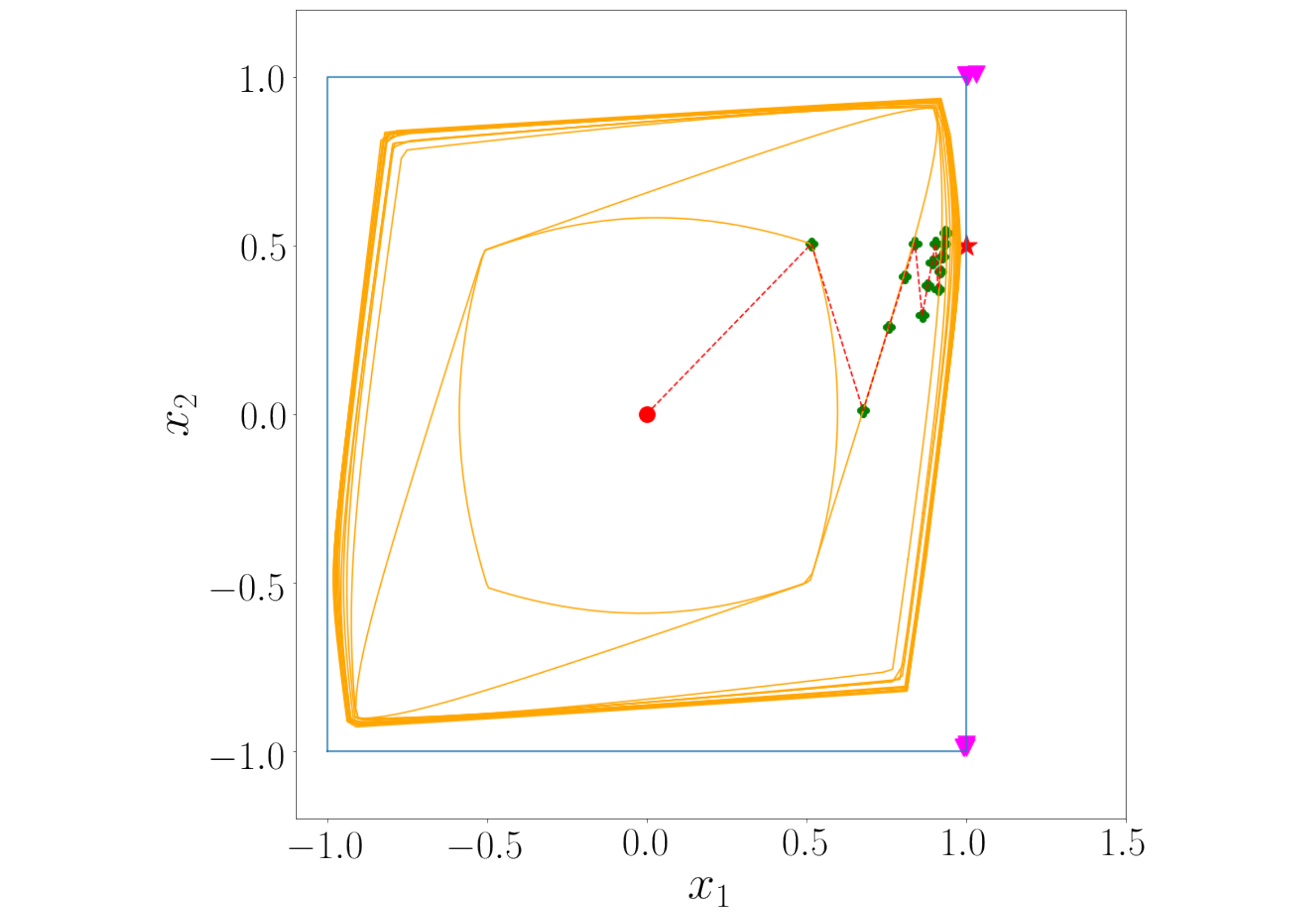}&
\includegraphics[width =0.28\textwidth]{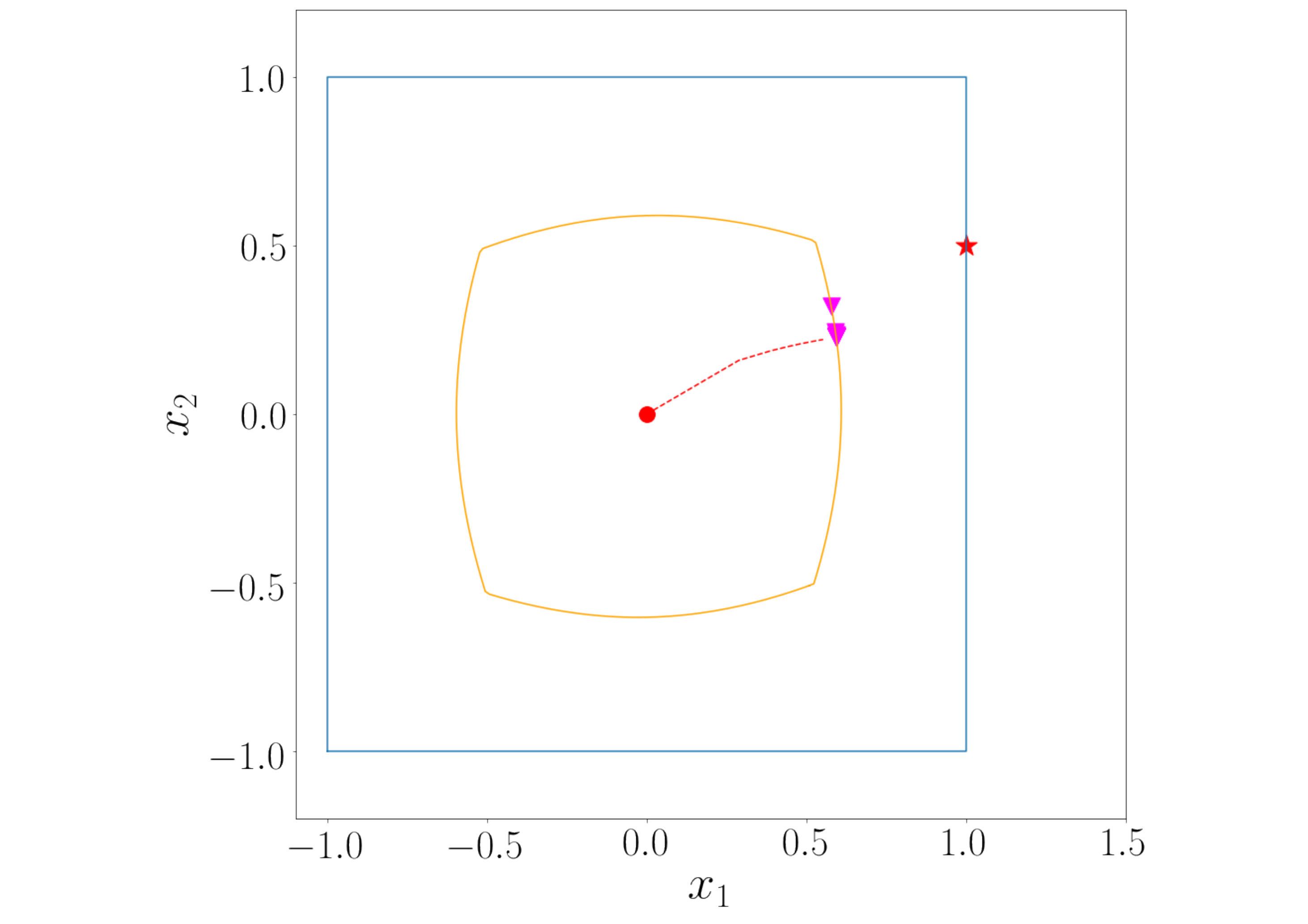}&
\includegraphics[width =0.27\textwidth]{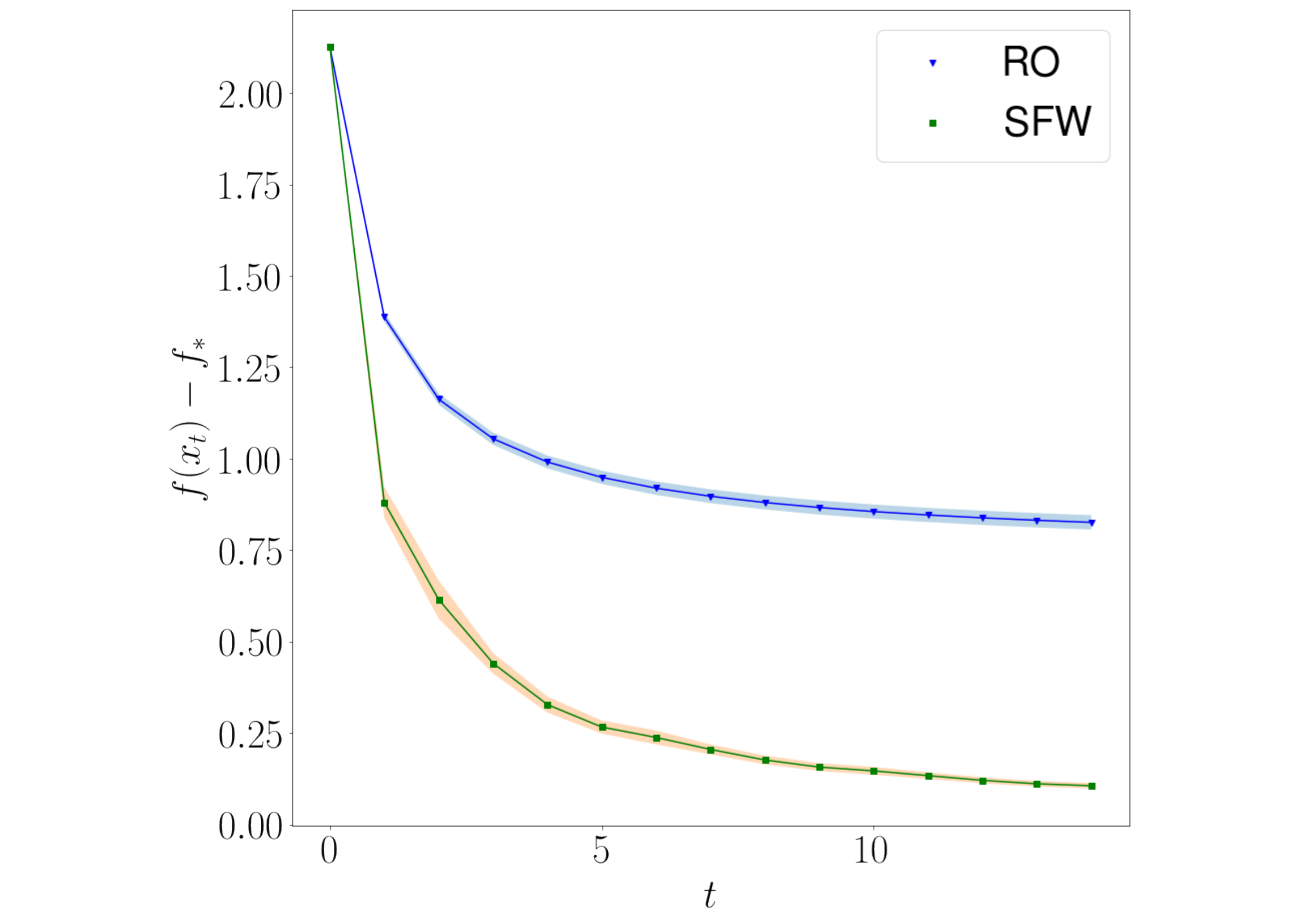}\\
(a) SFW& (b) RO& (c) Convergence rates of \\&&SFW and RO
\end{tabular}
\captionof{figure}{Trajectories and accuracy of the objective value by each iteration of optimization. The left pair of plots shows the convergence of one realization of SFW method and of the robust optimization approach (without safety set updates)
with $\sigma=0.1$,  $T = 15$, $N_{T} = 5500$. The orange lines denote the boundaries of the safety sets $S_t(\bar \de)$. Red circle denotes the starting point and star denotes the solution of the original problem. Magenta triangles denote the estimated DFS solutions.
}
\label{plots5}
\end{figure*}

We compare the proposed SFW method with an alternative approach in which we first make enough measurements in the a priori safe region to estimate the safety set (see Definition (\ref{eq:safety_set})) with sufficiently high probability. Next, we run a first-order method, such as FW with respect to the nonlinear set $S({\bar\de})$. Let us call this approach RO for robust optimization.  To compare these two methods we set an a priori number of measurements for the alternative RO method 
equal to the total number of measurements $N_t$, of SFW algorithm corresponding to $\de = 0.1$ and $T = 15$. After estimation of the safety set, we make $T=15$ iterations of the FW method with the constraint set ${S_T}({\bar\de})$. Thus,  the total number of measurement and the total number of optimization steps of the two methods are equal. During the run of the SFW, as per discussion in the above example, we reduce the number of measurements required to ensure safety at each iteration $t$, {online}. {Figure \ref{plots5}(a),(b) shows the optimization trajectories of each method. The 
{g}reen round points along the trajectory correspond to the points where the constraints were measured.  We also show the comparison of their convergence rates in Figure \ref{plots5}(c). As we can see, SFW algorithm performs better both in terms of estimates of the constraints and convergence rate. This difference in performance can be explained based on two observations.  First,  SFW moves measurements along the trajectory $\{x_t\}$, and this can lead to smaller variance of the estimates $\Sigma_t = \sigma^2\left(\bar X_t^T\bar X_t\right)^{-1}$. Hence, the measurements are more informative and the safety set $S_T({\bar\de})$  is larger. Second,  SFW algorithm is proven to converge to an  $\epsilon$-optimal solution corresponding to the true constraints. The RO approach however, can at the very best converge to an optimum with respect to a safety set estimated in advance. 
From the computational perspective, at each iteration the proposed SFW method requires an LP oracle, whereas  the alternative RO approach requires solving a second-order cone program. Hence, the SFW is more tractable. 

\section{CONCLUSION}\label{sec:conclusion}
We proposed a safe learning approach for convex costs and uncertain linear constraints.  This method uses information along the optimization trajectory to decrease the objective value and to explore an unknown feasible set. Meanwhile, it ensures feasibility for each iteration with high probability. We provided an analysis of convergence rate of {our }
algorithm, as well as of feasibility guarantees for its iterations. Our next steps are to generalize the results to nonlinear constraints and to provide performance guarantees in terms of regret. 


\section{ACKNOWLEDGEMENTS}
We thank Johannes Kirschner and Michel Baes for their helpful comments.
\addcontentsline{toc}{section}{Bibliography}
\bibliography{bibliography}{}
\bibliographystyle{apalike}
\clearpage
\appendix
\section{Proof of Fact~\ref{fact:fact1}}\label{proof:fact2}
\begin{proof}
Recall that the safety set $S_t(\bar \de)$ after iteration $t$ is defined by the following inequalities:
\begin{align}\label{INEQ1}
 S_t(\bar \de)& = \Bigg\{ x\in \R^d
 : \forall i=1,\ldots,m~\left[[\hat{a}^i_t]^T x - \hat{b}_t^i\right]+\nonumber\\&
+\phi^{-1}(\bar \de/m) \sigma\left\|
(\bar{X}_t^T\bar{X}_t)^{-1/2}\begin{bmatrix}x\\-1\end{bmatrix}\right\| \leq 0 \Bigg\}.
\end{align}
 
 Remember that $\bar{x}_t = \frac{X^T\mathbf{1}}{N}$ is an average of the measured points. Using the inversion formula for a block matrix, we obtain
\begin{align*}(\bar{X}_t^T\bar{X}_t)^{-1} &= \left[\begin{array}{cc}
      X_t^TX_t  & -X_t^T\mathbf{1} \\
      -\mathbf{1}^TX_t & N_t
\end{array}\right]^{-1} \\
&= \left[\begin{array}{cc}
      R_t  & R_t\bar{x}_t\\
      \bar{x}_t^TR_t 
      &\frac{1}{N_t}+\bar{x}_t^TR_t\bar{x}_t 
\end{array} \right],\end{align*}
where 
\begin{align}\label{R}
R_t &= \left[X_t^TX_t - N_t\bar{x}_t\bar{x}_t^T\right]^{-1}\nonumber \\
&= \left[\sum_{j=1}^{N_t}(x_{(j)} - \bar{x}_t)(x_{(j)} - \bar{x}_t)^T\right]^{-1}.
\end{align}

Let us denote by $\phi_{\bar\de} = \sigma\phi^{-1}(\bar\de/m)$
and by $\e^i_{t} = \hat{b}_t^i-(\hat a_t^i)^Tx_t$.
 Then, the $i$-th inequality in (\ref{INEQ1})  
can be rewritten as follows: \begin{align*}
\sqrt{\frac{\phi_{ \de}^2}{N_t}+ 
\phi_{\de}^2 (x - \bar{x}_t)^TR_t(x - \bar{x}_t)} \leq \e_t^i.
\end{align*}
Substituting $x=x_t$ to the above and combining the inequalities together, we obtain that the condition $x_t\in S_{t}(\bar\de)$ is equivalent to  \begin{align*}
 \phi_{\bar \de}\sqrt{\frac{1}{N_t}+ 
 (x_t - \bar{x}_t)^TR_t(x_t - \bar{x}_t)} \leq \min_{i=1,\ldots,m}\e_t^i.
\end{align*}
\end{proof}

\section{DFS solution proof}\label{proof:dfs}
{Let us recall that for the polytope $D\in \R^d$, by an active set $B$ we denote a set 
of indices of 
$d$ linearly independent constraints active in a vertex $V^B\in\R^d$ of $D$, i.e., $V^B = [A^B]^{-1}b^B$. Here, $A^B$ is a corresponding sub-matrix of $A$ and $b^B$ is the corresponding right-hand-side of the constraint.
The vertex estimate $\check V^B_t$ of a polytope is described by the system of linear equations $\hat{A}^B_tx = \hat{b}^B_t$.}
\begin{lemma}\label{lemm:1}
If $\beta \in \mathcal E_t(\bar \de)$ and $N_t  \geq \frac{C_{\bar\de}^2}{(\Gamma_0+1)^2}$, where $C_{\bar\de}$ is defined in (\ref{def:c}), then for any vertex $V^B$ defined by the active set $B$ and its estimate $\check V^B_t$ we have that the estimation error is bounded by
$$\|\check V^B_t - V^B\| \leq \frac{C_{\bar \de}}{\sqrt{N_t}},$$
where $C_{\bar\de} = \frac{2\phi_{\bar \de} d (\Gamma_0+1) }{\rho_{\min}[A^B]}\sqrt{\frac{\Gamma_0^2+1}{\omega_0^2} + 1}.$
\end{lemma}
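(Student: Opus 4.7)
The plan is to reduce the vertex-estimation error to a standard perturbation-of-linear-systems problem, bound the perturbation of $[A^B,b^B]$ using the confidence ellipsoids $\mathcal E^i_t(\bar\de/m)$, and then control the inverse $[\hat A^B_t]^{-1}$ with a Neumann-series argument.

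First I would subtract the defining identities $A^B V^B = b^B$ and $\hat A^B_t \check V^B_t = \hat b^B_t$ and rearrange to get
\begin{equation*}
\check V^B_t - V^B = [\hat A^B_t]^{-1}\bigl((\hat b^B_t-b^B) - (\hat A^B_t - A^B)V^B\bigr).
\end{equation*}
Taking norms, using $\|V^B\|\leq \Gamma_0$, and bounding the Euclidean norm of each row of the combined perturbation $[\hat A^B_t - A^B,\; \hat b^B_t - b^B]$ by $\max_{i\in B}\|\hat\beta^i_t - \beta^i\|$ (so the spectral norm is at most $\sqrt d$ times the row bound), gives
\begin{equation*}
\|\check V^B_t - V^B\| \leq \|[\hat A^B_t]^{-1}\|\sqrt{d}\sqrt{\Gamma_0^2+1}\,\max_{i\in B}\|\hat\beta^i_t-\beta^i\|.
\end{equation*}

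Next, since $\beta^i\in\mathcal E^i_t(\bar\de/m)$, the ellipsoidal definition yields $\|\hat\beta^i_t-\beta^i\|\leq \phi_{\bar\de}\,\|[\bar X_t^T\bar X_t]^{-1/2}\|$. The key quantitative step is to show
\begin{equation*}
\|[\bar X_t^T\bar X_t]^{-1/2}\| \leq \tfrac{1}{\sqrt{N_t}}\sqrt{\tfrac{\Gamma_0^2+1}{\omega_0^2}+1}.
\end{equation*}
For this I would apply the block-inversion formula from the proof of Fact~\ref{fact:fact1} to express $[\bar X_t^T\bar X_t]^{-1}$ in terms of $R_t^{-1}=\sum_j(x_{(j)}-\bar x_t)(x_{(j)}-\bar x_t)^T$, then exploit that at every iteration $k$ the algorithm samples in the $2d$ symmetric directions $x_k\pm\omega_0 e_i$. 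Pairing the $\pm e_i$ measurements, the contribution to $R_t^{-1}$ is at least $\frac{N_t\omega_0^2}{d}I^d$, so $\|R_t\|\leq d/(N_t\omega_0^2)$; the off-diagonal block and the bottom-right entry $\frac{1}{N_t}+\bar x_t^T R_t\bar x_t$ supply the $\Gamma_0^2/\omega_0^2$ and $1/N_t$ contributions via $\|\bar x_t\|\leq\Gamma_0$.

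Finally, to bound $\|[\hat A^B_t]^{-1}\|$ I would invoke the Banach-lemma perturbation estimate: using $\|[A^B]^{-1}\|\leq 1/\rho_{\min}(A^B)$, the hypothesis $N_t\geq C_{\bar\de}^2/(\Gamma_0+1)^2$ is calibrated precisely so that $\|[A^B]^{-1}\|\cdot\|\hat A^B_t - A^B\|\leq 1/2$, which yields $\|[\hat A^B_t]^{-1}\|\leq 2/\rho_{\min}(A^B)$. Substituting this into the first displayed bound and collecting constants reproduces $C_{\bar\de}/\sqrt{N_t}$ with the $C_{\bar\de}$ stated in the lemma. The main obstacle will be step two, namely obtaining the lower bound $R_t^{-1}\succeq \frac{N_t\omega_0^2}{d}I^d$: because samples are distributed around every iterate $x_0,\ldots,x_t$ rather than a single point, each $x_{(j)}-\bar x_t$ decomposes into an $\omega_0$-scale piece along a coordinate axis plus a trajectory displacement $x_k-\bar x_t$, and the symmetric pairing $x_k\pm\omega_0 e_i$ cancels the cross terms leaving only the $\omega_0^2$ contribution plus a positive-semidefinite residual; making this cancellation precise, uniformly in how measurements are spread across iterations, is the technical heart of the argument.
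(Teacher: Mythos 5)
Your proof is correct and uses the same three ingredients as the paper's: the confidence ellipsoid to bound the row-wise perturbation of $[A^B,b^B]$, the block-inversion of $\bar X_t^T\bar X_t$ together with the symmetric $\pm\omega_0 e_i$ sampling pattern to get $\|\Sigma_t^{1/2}\|=O(\sqrt{d/N_t})$, and a Banach/Neumann argument to control the perturbed inverse once $N_t\geq 4U^2=C_{\bar\de}^2/(\Gamma_0+1)^2$. The one genuine difference is your opening identity $\check V^B_t-V^B=[\hat A^B_t]^{-1}\bigl(\zeta_t-G_tV^B\bigr)$, which is cleaner than the paper's route: the paper expands $[A^B+G_t]^{-1}(b^B+\zeta_t)$ via the resolvent identity $(A+B)^{-1}=A^{-1}-(I+A^{-1}B)^{-1}A^{-1}BA^{-1}$ and then bounds a product of four terms (a)--(d) separately; your version collapses this to two factors and, because $\zeta_t-G_tV^B$ has $i$-th entry $-(\hat\beta^i_t-\beta^i)^T[V^B;-1]$, it also yields the marginally tighter factor $\sqrt{\Gamma_0^2+1}$ in place of the paper's $(\Gamma_0+1)$, so your bound is still dominated by the stated $C_{\bar\de}$. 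Two small points: your displayed intermediate bound on $\|[\bar X_t^T\bar X_t]^{-1/2}\|$ drops a $\sqrt{d}$ (your own subsequent estimate $\|R_t\|\leq d/(N_t\omega_0^2)$ gives $\|[\bar X_t^T\bar X_t]^{-1/2}\|\leq\sqrt{d/N_t}\,\sqrt{(\Gamma_0^2+1)/\omega_0^2+1}$), but restoring it still lands within $C_{\bar\de}/\sqrt{N_t}$ since the lemma's constant carries a full factor of $d$; and the ``technical heart'' you flag is indeed handled exactly as you describe, since $(v+w)(v+w)^T+(v-w)(v-w)^T=2vv^T+2ww^T$ makes the cross terms with the trajectory displacement cancel exactly, leaving $2\omega_0^2e_ie_i^T$ plus a positive semidefinite remainder regardless of how the measurements are spread over the iterates.
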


\begin{proof}
 Since the LSE (Least Squares Estimation) is unbiased,
$$\E \hat{A}^B_t = A^B, ~\E \hat{b}^B_t = b^B.$$

Let us denote by $\zeta_{t} = \hat{b}_t^B - b^B$ the uncertainty in estimation of $b^B$, and $G_t = \hat{A}_t^B - A^B$ the uncertainty in estimation of $A^B$. 

Our aim is to bound the error of the vertex estimation $\|\check V^B_t - V^B\|$. Recall that 
\begin{align*} &\check V^B_t - V^B = [\hat{A}_t^B]^{-1}\hat{b}_t^B - [A^B]^{-1}b^B =\\
&= [A^B + G_t]^{-1}(b^B+\zeta_t) - [A^B]^{-1}b^B.
\end{align*}
Note that for any matrices $A,B$ it holds that
$$(A+B)^{-1} = A^{-1} - (I + A^{-1}B)^{-1}A^{-1}BA^{-1}.$$

Therefore, we can modify the expression for the $\check V^B_t - V^B$ as follows
\begin{align*}
&\check V^B_t - V^B = \\
 &= \left[[A^B]^{-1} - (I + [A^B]^{-1}G_t)^{-1}[A^B]^{-1}G_t [A^B]^{-1}\right](b^B+\zeta_t)\\& - [A^B]^{-1}b^B= \\
 &= [A^B]^{-1}b^B + [A^B]^{-1}\zeta_t  -\\&- (I + [A^B]^{-1}G_t)^{-1}[A^B]^{-1}G_t [A^B]^{-1}(b^B+\zeta_t) -\\& 
 - [A^B]^{-1}b^B =\\
& =  [A^B]^{-1}\zeta_t  - (I + [A^B]^{-1}G_t)^{-1}[A^B]^{-1}G_t [A^B]^{-1}(b^B+\zeta_t).
\end{align*}

The norm of the difference between the vertex $V^B$ of the set $D$ and its estimation can be bounded by
 \begin{align}\label{eq:bound}
 &\|\hat{V}^B_t - V^B\| \leq \underbrace{ \|[A^B]^{-1}\zeta_t\|}_{(a)}+\nonumber \\&+\underbrace{\| [A^B]^{-1}G_t\|}_{(b)}\underbrace{\|V^B + [A^B]^{-1}\zeta_t\|}_{(c)}\underbrace{\|\left(I + [A^B]^{-1}G_t\right)^{-1}\|}_{(d)}.
 \end{align}
 To obtain the bounds on the terms (a),(b),(c),(d), let us first obtain the bounds on $\|G_t\|$ and $\|\zeta_t\|$.
 
Assume that for each $i=1,\ldots,m$ $\beta^i \in \mathcal E^i_t(\bar \de)$, where $$\mathcal E^i_t(\bar \de) = \left\{z \in R^{d+1}:(\hat{\beta}_t^i - z)^T\Sigma_t^{-1}(\hat{\beta}_t^i - z)\leq \phi^{-1}(\bar \de)^2\right\},$$ i.e., that for any active set $B$ describing the vertex $V^B$ we have $\beta^B \in \mathcal E^B_t(\bar \de)$. 
{Consequently, $\|\hat a^i_t - a^i\|^2+|\hat b^i - b^i|^2 \leq \phi^{-1}(\bar \de)\|\Sigma_t^{1/2}\|~ \forall i  \in B.$} Then, for each row of $G_t$ we have $\|\hat a^i_t - a^i\|\leq \phi^{-1}(\bar \de)\|\Sigma_t^{1/2}\|$, and for each element of $\zeta_t$  we have $|\hat b^i_t - b^i|\leq \phi^{-1}(\bar \de)\|\Sigma_t^{1/2}\|.$ 
Hence, for $\|G_t\|$ we obtain
\begin{align}\label{G}
\|G_t\| \leq \|G\|_F = \sqrt{\sum_{i\in B}\|\hat a^i_t - a^i\|^2_2} \leq \sqrt{d}\phi^{-1}(\bar \de)\|\Sigma_t^{1/2}\|.
\end{align} 
Similarly, we obtain a bound on $\|\zeta_t\|$:
\begin{align}\label{zeta}\|\zeta_t\|  = \sqrt{\sum_{i\in B}(\hat b^i_t - b^i)^2} \leq \sqrt{d}\phi^{-1}(\bar \de)\|\Sigma_t^{1/2}\|.\end{align} 

For the LSE covariance matrix norm $\|\Sigma_t^{1/2}\|$ we have 
\begin{align*}
\|\Sigma_t^{1/2}\| &= \sigma\|(\bar X_t^T\bar X_t)^{-1}\|^{1/2} = 
\\&=\sigma \left\|\begin{bmatrix}I\\\bar x_t^T\end{bmatrix}R_t \begin{bmatrix}I&\bar x_t\end{bmatrix} + \begin{bmatrix}0&0\\0&1/N_t\end{bmatrix}\right\|^{1/2} \leq
\\&\leq \sigma\sqrt{\|R_t\|\|\bar x_t\|^2+ \|R_t\| + \frac{1}{N_t}},
\end{align*}
where $R_t$ was defined in (\ref{R}).  

Note that we make measurements as it is described in Step 4 of the SFW algorithm, i.e., we make measurements at all coordinate directions within small step size $\omega_0$ from points generated by the method.
Then, each new $2d$ measurements result in addition {of} a matrix  
$\sum_{j = N_t+1}^{N_t + 2d} (x_{(j)}-\bar x)(x_{(j)}-\bar x)^T \succeq \omega_0^2 I$ {to} the matrix $R_t^{-1}= \sum_{j=1}^{N_t} (x_{(j)}-\bar{x}_t)(x_{(j)}-\bar{x}_t)^T$. Hence, $R_t^{-1} \succeq \frac{N_t\omega_0^2}{2d} I$.
Hence, the minimal eigenvalue of the covariance matrix $R_t^{-1}$  is bounded from below by the value $\lambda_{\min}(R_t^{-1})\geq \frac{ N_t\omega_0^2}{d}.$ 
Thus, we obtain the following bound on the norm of $R_t$:  \begin{align}\label{R_t} \|R_t\|\leq \frac{d}{N_t\omega_0^2}.\end{align} 
Note that $\bar x_t \in D$, hence $\|\bar x_t\|\leq \Gamma_0$.
It follows that \begin{align}\label{sigma}
\|\Sigma_t^{1/2}\|&\leq \sigma\sqrt{\|R_t\|\|\bar x_t\|^2+ \|R_t\| + \frac{1}{N_t}}\leq\nonumber\\
&\leq\sigma\sqrt{\frac{d}{N_t\omega_0^2}\|\bar x_t\|^2+ \frac{d}{N_t\omega_0^2} + \frac{1}{N_t}}\leq\nonumber\\
&\leq\frac{\sigma \sqrt{d}
\sqrt{\frac{\Gamma_0^2+1}{\omega_0^2}+\frac{1}{d}
}}{\sqrt{N_t}} \leq \frac{\sigma \sqrt{d}
\sqrt{\frac{\Gamma_0^2+1}{\omega_0^2}+1
}}{\sqrt{N_t}} .    
\end{align}

In order to bound terms (a),(b),(c),(d) in inequality (\ref{eq:bound}), let us also bound the norm of the matrix $[A^B]^{-1}$:
\begin{align}\label{Ab}\|[A^B]^{-1}\|=\rho_{\max}([A^B]^{-1}) =  
\frac{1}{\rho_{\min}[A^B]}\leq \frac{1}{\rho_{\min}(D)}.\end{align}
Then, combining inequalities (\ref{G}),(\ref{zeta}),(\ref{sigma}),(\ref{Ab}), we bound terms (a) and (b) as follows:
$$(a): \left\|[A^B]^{-1}\zeta_t\right\| \leq\|[A^B]^{-1}\|\|\zeta_t\|\leq \frac{U}{\sqrt{N_t}},$$
$$(b): \left\|[A^B]^{-1}G_t\right\| \leq \|[A^B]^{-1}\|\|G_t\| \leq \frac{U}{\sqrt{N_t}},$$
where $U$ is defined by  $$U = \frac{\phi_{\bar \de} d }{\rho_{\min}(D)}\sqrt{\frac{\Gamma_0^2+1}{\omega_0^2} + 1}.$$
Further, let us bound term (d). 
For $N_t 
\geq
4U^2$
it holds that
\begin{align*}
&\left\|[A^B]^{-1}G_t\right\| \leq \frac{U}{\sqrt{N_t}} \leq \frac{1}{2},\\&\left\|[A^B]^{-1}\zeta_t\right\| \leq \frac{U}{\sqrt{N_t}}\leq  \frac{1}{2}.\end{align*}
As  such, for $N_t \geq 4U^2$ we have 
\begin{align*}\|
&\left(I + [A^B]^{-1}G_t \right)^{-1}\| = 
\|I^{-1} -  \left(I + [A^B]^{-1}G_t \right)^{-1} [A^B]^{-1}G_t\|\leq \\
&\leq \|I\| + \|[A^B]^{-1}G_t\|\|\left(I + [A^B]^{-1}G_t \right)^{-1}\|\leq \\
&\leq 1 + \frac{1}{2} \|\left(I + [A^B]^{-1}G_t \right)^{-1}\|
\end{align*}
Hence 
\begin{align}\label{(d)}\left\|\left(I + [A^B]^{-1}G_t \right)^{-1} \right\| \leq 2.\end{align}
Finally, term (c) can be bounded as follows:
\begin{align}\label{(c)}\left\|V^B + [A^B]^{-1}\zeta_t \right\| \leq \Gamma_0 + \frac{U}{\sqrt{N_t}}.
\end{align}
Combining these all together, we obtain
\begin{align*}
 &\|\hat{V}^B_t - V^B\| \leq  \frac{U}{\sqrt{N_t}}+  2\frac{U}{\sqrt{N_t}}\left(\Gamma_0 +  \frac{U}{\sqrt{N_t}}\right)\leq\\&\leq 2\frac{U}{\sqrt{N_t}}\left(\Gamma_0 + 1\right) = \frac{C_{\bar \de}}{\sqrt{N_t}},
 \end{align*}
 where 
 $$C_{\bar\de} = 2U(\Gamma_0+1) = \frac{2 \phi_{\bar \de} d (\Gamma_0+1) }{\rho_{\min}(D)}\sqrt{\frac{\Gamma_0^2+1}{\omega_0^2} + 1}.$$


Since $N_t = C_n t^2 (\ln t^2) \geq C_n$, the above bound holds
under the proper choice of the constant $$C_n \geq 4U^2 =  \frac{C_{\bar\de}^2}{(\Gamma_0+1)^2} = \frac{4 d^2 \phi^2_{\bar \de}}{\rho^2_{\min}(D)} \left(\frac{\Gamma_0^2+1}{\omega_0^2} + 1\right).$$

\end{proof}
\paragraph{\Cref{prop:dfs}}
\begin{proof}
Let us bound the difference between the solution of the estimated DFS and the solution of the true DFS. Estimated DFS is a linear program defined by \begin{align*}
\hat{s}_t = &\arg\min_{\hat{A}_tx \leq \hat{b}_t}
\la  c_t,x \ra,
\end{align*}
where $c_t = \nabla f(x_t)$. 
Any solution of such a linear program is a vertex {(or convex hull of vertices)} of the polytope $\hat{A}_tx \leq \hat{b}_t$. Let 
$\hat V^2_t$ be the estimated DFS solution vertex $\hat V^{2}_t  =\hat s_t  = \arg\min_{s\in\hat D_t}\la  c_t,s \ra.$ And correspondingly, let 
 $V^{1}_t$ be the true DFS solution $V^{1}_t  =  s_t = \arg\min_{s\in D}\la c_t,s \ra.$ 

Let us define by $\pi_{\hat D_t} V^1_t$ the projection of $ V^1_t$ onto $\hat D_t$: $\bar V^1_t = Proj_{\hat D_t} V^1_t,$ 
and correspondingly we define $\tilde V^2_t$ as $\tilde V^2_t = Proj_{D}\hat V^2_t$.  
 Recall that the  estimate $\check V^B$ of any vertex $V^B = [A^{B}]^{-1}b^B$ of the polytope $D$ is described by the system of linear equations $\hat{A}^B_tx = \hat{b}^B_t$. 
 Since the estimates $\check V^B$ denote intersections of the hyper-planes $\la \hat a^j, s\ra = \hat b^j~\forall j\in B$ for some particular subset of indices $B$, the polytope $\hat D_t$ lies inside the convex hull of the estimates $\check V^B_t.$ 
 Hence, any point $s \in \hat D_t$ cannot be further from $D$ than the estimates of all the vertices $\check V^i$ from $D$. By \Cref{lemm:1}, if $N_t  \geq \frac{C_{\bar\de}^2}{(\Gamma_0+1)^2}$ with $C_{\bar\de}$ defined in (\ref{def:c}) and $\beta\in \mathcal E_t(\bar \de)$, then for any vertex $V^B$ of $D$ we have $\|V^B  -  \check V^B_t\| \leq \frac{C_{\bar \de}}{\sqrt{N_t}}$, thus we obtain that the distance from any point $s \in \hat D_t$ to $D$ is less than $\frac{C_{\bar \de}}{\sqrt{N_t}}$. I.e., we have $\| \bar V^1_t-V^1_t \| \leq \frac{C_{\bar \de}}{\sqrt{N_t}}.$

Similarly, we show that the distance from any point $s\in D$ to the set $\hat D_t$ is upper bounded by $ \frac{C_{\bar \de}}{\sqrt{N_t}}$. Again, we can see that $D$ is bounded by the convex hull of $V^{\hat B}$, where each $V^{\hat B} = [A^{\hat B}]^{-1}b^{\hat B}$ corresponds to a vertex $\check V^{\hat B}_t = [\hat A_t^{\hat B}]^{-1}\hat b_t^{\hat B}$ of $\hat D_t$. Hence , the distance from any point $s\in D$ to the set $\hat D_t$ is upper bounded by $\max_{\hat B}\| V^{\hat B}-\check V^{\hat B}_t\|.$ Then, by \Cref{lemm:1} we have  $\|\tilde V^2_t-\hat V^2_t\| \leq \frac{C_{\bar \de}}{\sqrt{N_t}}$.

Note that  $\bar V^1_t\in \hat D_t$, $\tilde V^2_t\in D$.  From the definitions of $\hat V^2_t, V^1_t$ above it follows that
\begin{align*}&c_t^T\hat V^2_t\leq c_t^T\bar V^1_t,\\
&c_t^T V^1_t\leq c_t^T \tilde V^2_t.\end{align*} 
 Hence, we have \begin{align*}
     &c_t^T\hat V^2_t - \|c_t\|\frac{C_{\bar \de}}{\sqrt{N_t}} \leq c_t^T\tilde V^1_t - \|c_t\|\frac{C_{\bar \de}}{\sqrt{N_t}}\\&\leq c_t^T V^1_t \leq c_t^T \tilde V^2_t \leq c^T\hat V^2_t + \|c_t\|\frac{C_{\bar \de}}{\sqrt{N_t}}. 
 \end{align*}
 Thus, we obtain that if $\beta\in \mathcal E_t(\bar \de)$, then $$E_t = |c_t^T(\hat{s}_t-s_t)| = |c_t^T\hat V^2_t-c_t^TV^1_t| \leq \|c_t\|\frac{C_{\bar \de}}{\sqrt{N_t}}.$$

Note that $\|c_t\|\leq M$, where $M$ is the Lipschitz constant of the objective. Thus, if $\beta\in \mathcal E_t(\bar \de)$, then $E_t \leq \frac{C_{\bar \de} M}{\sqrt{N_t}}$, i.e.,
\begin{align*}
&\Prob \left\{E_t \leq \frac{C_{\bar \de} M}{\sqrt{N_t}}\right\} \geq 1-\bar\de.%
\end{align*}
\end{proof}
\section{Proof of \Cref{prop:gamma}}\label{proof:lemma1}
First, we provide some preliminary lemmas for the proof of \Cref{prop:gamma}.

Let us denote by
$ {\breve{x}_t} =  {x_{t}}-\bar x_t$,
$\Delta^k_t = \hat{s}_{ {t}} -   {x_{k}}$, and recall that $\e^i_{t} = \hat{b}_t^i - [\hat{a}_t^i]^T {x_{t}}$. 
By $\hat V_{t-1}$ we call the solution of the estimated DFS at the step $t-1$, by $\tilde V_{t-1} = Proj_{ D}\hat V_{t-1}$, and by $\bar V_{t,t-1} = Proj_{\hat D_t}V_{t-1}$. By $B_{t}$ we denote the active set corresponding to $\hat V_{t}$  (see Lemma 1 for definition) and by $B_{t-1}$ the active set corresponding to $\hat V_{t-1}$  
 
\begin{lemma}\label{lemm:3}
If $\beta\in \mathcal E_t(\bar \de)\cap \mathcal E_{t-1}(\bar \de)$ holds, then we have
\begin{align*}
    \min_i\la \hat{a}_t^i, \Delta_{t}^{t}\ra \geq&  (1-\gamma_{t-1}) \min_i \la \hat{a}_{t}^i, \Delta_{t}^{t-1}\ra -\\&- \frac{2\gamma_{t-1}\max_i\|\hat{a}_t^i\|C_{\bar\de}}
    {\sqrt{N_{t-1}}}.
\end{align*}
\end{lemma}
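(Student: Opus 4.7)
The plan is to turn the SFW update into a one-step recursion on $\min_i\la\hat a_t^i,\Delta_t^t\ra$ by isolating a linear ``contractive'' part and a low-order ``LP drift'' part, then to bound the drift using \Cref{lemm:1} applied at two consecutive iterations.

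First I would use the update rule $x_t=(1-\gamma_{t-1})x_{t-1}+\gamma_{t-1}\hat s_{t-1}$ to obtain the algebraic identity
\begin{align*}
\Delta_t^t=\hat s_t-x_t=(1-\gamma_{t-1})\Delta_t^{t-1}+\gamma_{t-1}(\hat s_t-\hat s_{t-1}).
\end{align*}
Taking $\la\hat a_t^i,\cdot\ra$ of both sides and then $\min_i$, and using that $\min_i(\alpha u_i+\beta v_i)\ge\alpha\min_i u_i+\beta\min_i v_i$ whenever $\alpha,\beta\ge0$, immediately reduces the lemma to the drift bound
\begin{align*}
\min_i\la\hat a_t^i,\hat s_t-\hat s_{t-1}\ra\;\ge\;-\frac{2\max_i\|\hat a_t^i\|\,C_{\bar\de}}{\sqrt{N_{t-1}}}.
\end{align*}

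For that drift bound I would introduce an intermediate point in $\hat D_t$ close to $\hat s_{t-1}$. Let $B_{t-1}$ be the active set of $\hat s_{t-1}=\hat V_{t-1}$ in $\hat D_{t-1}$, let $V_{t-1}=[A^{B_{t-1}}]^{-1}b^{B_{t-1}}$ be the true vertex of $D$ with the same active set, and let $\bar V_{t,t-1}=\mathrm{Proj}_{\hat D_t}V_{t-1}$. Under $\beta\in\mathcal E_{t-1}(\bar\de)$, \Cref{lemm:1} at step $t-1$ gives $\|\hat s_{t-1}-V_{t-1}\|\le C_{\bar\de}/\sqrt{N_{t-1}}$; under $\beta\in\mathcal E_t(\bar\de)$, the two-sided polytope-approximation argument in the proof of \Cref{prop:dfs} gives $\|V_{t-1}-\bar V_{t,t-1}\|\le C_{\bar\de}/\sqrt{N_t}\le C_{\bar\de}/\sqrt{N_{t-1}}$ since $N_t\ge N_{t-1}$. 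Triangle inequality then yields $\|\hat s_{t-1}-\bar V_{t,t-1}\|\le 2C_{\bar\de}/\sqrt{N_{t-1}}$. Writing $\la\hat a_t^i,\hat s_t-\hat s_{t-1}\ra=\la\hat a_t^i,\hat s_t-\bar V_{t,t-1}\ra+\la\hat a_t^i,\bar V_{t,t-1}-\hat s_{t-1}\ra$, the second summand is controlled by Cauchy--Schwarz and the preceding distance estimate.

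The hard part will be the first summand $\la\hat a_t^i,\hat s_t-\bar V_{t,t-1}\ra$: the LP optimality of $\hat s_t$ over $\hat D_t$ only gives a comparison in the objective direction $c_t=\nabla f(x_t)$, not along an arbitrary constraint row. My plan here is to exploit that $\bar V_{t,t-1},\hat s_t\in\hat D_t$ both satisfy $\la\hat a_t^i,\cdot\ra\le\hat b_t^i$: at constraints binding at $\hat s_t$ the first summand is automatically nonnegative, and at non-binding constraints the residual slack $\hat b_t^i-\la\hat a_t^i,\hat s_t\ra\ge0$ only strengthens the inequality. The $\max_i\|\hat a_t^i\|$ factor on the right-hand side is exactly the uniform constant needed to absorb the Cauchy--Schwarz bound across all rows, after which substitution into the recursion of the second paragraph completes the proof.
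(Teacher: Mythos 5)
Your route is the same as the paper's: the same one-step expansion of $\Delta_t^t$ via the update rule (your sign $\gamma_{t-1}(\hat s_t-\hat s_{t-1})$ is the correct one; the paper's display carries a sign typo there), the same intermediate points $V_{t-1}$ and $\bar V_{t,t-1}=Proj_{\hat D_t}V_{t-1}$, and the same double application of \Cref{lemm:1} at steps $t-1$ and $t$ to produce the $2C_{\bar\de}/\sqrt{N_{t-1}}$ drift constant. Everything up to and including $\|\hat s_{t-1}-\bar V_{t,t-1}\|\le 2C_{\bar\de}/\sqrt{N_{t-1}}$ and the Cauchy--Schwarz control of $\la\hat a_t^i,\bar V_{t,t-1}-\hat s_{t-1}\ra$ matches the paper, and your superadditivity-of-min step is actually cleaner than the paper's selection of a single index $j$.

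The gap is exactly at the point you flagged as "the hard part": the claim that at non-binding constraints the residual slack "only strengthens the inequality" is false, and with it the reduction to a drift bound over \emph{all} rows $i$. For a row $i$ not active at $\hat s_t$,
\begin{align*}
\la\hat a_t^i,\hat s_t-\bar V_{t,t-1}\ra=\bigl(\hat b_t^i-\la\hat a_t^i,\bar V_{t,t-1}\ra\bigr)-\bigl(\hat b_t^i-\la\hat a_t^i,\hat s_t\ra\bigr)
\end{align*}
is a difference of two nonnegative slacks and can be as negative as $-\Gamma\,\|\hat a_t^i\|$: when $\nabla f(x_t)$ changes direction, $\hat s_t$ can jump to a vertex far from constraint $i$ while $\hat s_{t-1}$ was close to it (take $D=[-1,1]^2$, $\hat s_{t-1}=(1,1)$, $\hat s_t=(-1,-1)$, and the row $x_1\le 1$), so $\min_i\la\hat a_t^i,\hat s_t-\hat s_{t-1}\ra$ is of order the polytope diameter, not $O(1/\sqrt{N_{t-1}})$. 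The paper's proof avoids this by restricting the minimum to the active set $B_t$ of $\hat s_t$ in its very first line and carrying $\min_{i\in B_t}$ through the entire argument; for $i\in B_t$ your own observation $\la\hat a_t^i,\hat s_t-\bar V_{t,t-1}\ra=\hat b_t^i-\la\hat a_t^i,\bar V_{t,t-1}\ra\ge 0$ is all that is needed, and your recursion then closes. So the fix is to prove (and read) the bound for $\min_{i\in B_t}$ rather than for the minimum over all rows; as stated over all $i$, the intermediate drift inequality you reduce to does not hold.
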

\begin{proof}
\begin{align*}
    \forall \hat{s}_t : \min_i\la \hat{a}_t^i,  \Delta^{t}_t\ra &= \min_i\la \hat{a}_t^i, \hat{s}_t - x_{t}\ra  = \min_{i\in B_t}\la \hat{a}_t^i, \hat{s}_t - x_{t}\ra =\\
    &= \min_{i \in B_t}\la \hat{a}_t^i, \hat{s}_t - x_{t-1} - \gamma_{t-1}(\hat s_{t-1} - x_{t-1})\ra \\
    &= \min_{i \in B_t}\la \hat{a}_t^i, (1-\gamma_{t-1})(\hat{s}_t - x_{t-1}) + \\
    &+\gamma_{t-1}(\hat s_{t-1} - \hat s_t)\ra=\\ (\text{Let } j = \arg\min_{i\in B_t}&\la \hat{a}_t^i,  \Delta_t^{t-1}\ra)\\
    &=(1-\gamma_{t-1}) \la \hat{a}_t^j, \hat{s}_t - x_{t-1}\ra  + \\&
    +\gamma_{t-1}\la \hat a_t^j,\hat s_{t-1} - \hat s_t\ra. \end{align*}
    For the point $s_t$ we have $\la \hat a^j_t, s_t\ra = 0$ and for any point $s\in \hat D_{t}$  we have 
    $\la \hat a_t^j,s \ra \geq 0$. Note that   $\hat V_{t,t-1} \in \hat D_t$ and that $\|\hat V_{t-1} - \hat V_{t,t-1}\|\leq \|\hat V_{t-1} -V_{t-1}\|+\| V_{t-1} - V_{t,t-1}\|.$

    \begin{align}\label{eq:2}&\min_i\la \hat{a}_t^i, \Delta_{t}^{t}\ra\geq(1-\gamma_{t-1}) \la \hat{a}_t^j, \hat{s}_t - x_{t-1}\ra  + \gamma_{t-1}\la \hat a_t^j,\hat V_{t-1}\ra\nonumber\\
    &\geq (1-\gamma_{t-1}) \la \hat{a}_t^j, \hat{s}_t - x_{t-1}\ra  + \gamma_{t-1}\la \hat a_t^j,\hat V_{t-1} - \hat V_{t,t-1}\ra\nonumber \\
    &\geq(1-\gamma_{t-1}) \min_i\la \hat{a}_t^i, \hat{s}_t - x_{t-1}\ra -\nonumber\\
    & \gamma_{t-1}\max_i\|\hat{a}_t^i\| \|\hat V_t - \hat V_{t,t-1}\| .
\end{align}
Using the result of \Cref{lemm:1} 
we can obtain that if $\beta\in \mathcal E_t(\bar \de)\cap \mathcal E_{t+1}(\bar \de)$ and $N_t  \geq \frac{C_{\bar\de}^2}{(\Gamma_0+1)^2}$ with $C_{\bar\de}$ defined in (\ref{def:c}), and applying the same arguments as in the proof of Proposition 1, we obtain
\begin{align}\label{eq:1}
   \|\hat{V}_{t-1} - \hat{V}_{t,t-1}\| &\leq \|\hat{V}_{t-1} - V_{t-1}\|+  \|V_{t-1} - \hat{V}_{t,t-1}\| \nonumber\\
   & \leq \frac{C_{\bar \de}}{\sqrt{N_t}}+\frac{C_{\bar \de}}{\sqrt{N_{t+1}}}.\end{align}
Then, combining (\ref{eq:1}) with (\ref{eq:2}), we have
\begin{align*}
  &\min_i\la \hat{a}_t^i,  \Delta^{t}_t\ra  \geq\\&
  \geq (1-\gamma_{t-1}) \min_i\la \hat{a}_t^i, \Delta_{t}^{t-1} \ra -\frac{2\gamma_{t-1}\max_i\|\hat{a}_t^i\|C_{\bar\de}}{ \sqrt{N_{t-1}}}.
\end{align*}
\end{proof}
{\Cref{lemm:3} above is an induction step in the proof of \Cref{lemm:4}.}
Lemma \ref{lemm:4} below bounds the fastest rate of decreasing the distance to the boundaries of $D$ for the SFW algorithm. Recall that $\mathcal{F}_t = \{\beta \in \cap_{k=0}^{t} \mathcal E_{k}(\bar \de)\}.$ 
\begin{lemma}\label{lemm:4}

If $\F_t$ holds, then we have
\begin{align*}
    \min_i\la \hat{a}_t^i, \Delta_{t}^{t}\ra 
    \geq \frac{\min_i\la \hat{a}_t^i , {{\Delta}_t^0} \ra}{t+2} \left(1 - \frac{C_{\bar \de}\ln \ln {t} \max_i\|\hat a^i_t\| }{\sqrt{C_n } {\min_i\la \hat a_t^i,\bar \Delta_0}\ra}\right).
\end{align*}
\end{lemma}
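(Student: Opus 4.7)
The plan is to read Lemma~\ref{lemm:3} as a one-step recursion, for fixed $t$, on the quantity $M_k := \min_i \la \hat a_t^i, \Delta_t^k\ra$ (only $k$ varies, while the estimate index $t$ on $\hat a_t^i$ stays fixed), and then to unroll it from $k=t$ back to $k=0$.

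First I would generalize Lemma~\ref{lemm:3} from the step $t{-}1\to t$ to a generic step $k{-}1\to k$ for any $1\leq k\leq t$. The same algebraic splitting used in the proof of Lemma~\ref{lemm:3}, applied to the update $x_k = x_{k-1}+\gamma_{k-1}(\hat s_{k-1}-x_{k-1})$, gives
\begin{align*}
\la \hat a_t^j,\hat s_t-x_k\ra = (1-\gamma_{k-1})\la \hat a_t^j,\hat s_t-x_{k-1}\ra +\gamma_{k-1}\la \hat a_t^j,\hat s_t-\hat s_{k-1}\ra.
\end{align*}
Under $\F_t$, both $\beta\in\mathcal E_{k-1}(\bar\de)$ and $\beta\in\mathcal E_t(\bar\de)$ hold, so Lemma~\ref{lemm:1} applied at step $k-1$ and at step $t$ (combined with a triangle inequality) yields $\|\hat s_{k-1}-\bar V_{t,k-1}\|\leq 2C_{\bar\de}/\sqrt{N_{k-1}}$, where $\bar V_{t,k-1}$ is the vertex of $\hat D_t$ sharing the active set of $\hat s_{k-1}$. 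Since $\bar V_{t,k-1}\in\hat D_t$, for any $j\in B_t$ we have $\la\hat a_t^j,\bar V_{t,k-1}\ra\leq \hat b_t^j=\la\hat a_t^j,\hat s_t\ra$, and this produces the recursion
\begin{align*}
M_k\geq (1-\gamma_{k-1})M_{k-1}-\frac{2\gamma_{k-1}\max_i\|\hat a_t^i\|C_{\bar\de}}{\sqrt{N_{k-1}}}.
\end{align*}

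Iterating this inequality from $k=t$ down to $k=0$ gives
\begin{align*}
M_t\geq \Big(\prod_{k=0}^{t-1}(1-\gamma_k)\Big)M_0-\sum_{k=0}^{t-1}\frac{2\gamma_k\max_i\|\hat a_t^i\|C_{\bar\de}}{\sqrt{N_k}}\prod_{j=k+1}^{t-1}(1-\gamma_j).
\end{align*}
With $\gamma_k=\tfrac{1}{k+2}$ the telescoping products evaluate to $\prod_{k=0}^{t-1}(1-\gamma_k)=\tfrac{1}{t+1}$ and $\prod_{j=k+1}^{t-1}(1-\gamma_j)=\tfrac{k+2}{t+1}$, and the factor $k+2$ exactly cancels $\gamma_k=\tfrac{1}{k+2}$ inside the sum, reducing the error to $\tfrac{2\max_i\|\hat a_t^i\|C_{\bar\de}}{t+1}\sum_{k=0}^{t-1}\tfrac{1}{\sqrt{N_k}}$. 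Substituting $N_k=C_n k^2(\ln k)^2$ gives $\tfrac{1}{\sqrt{N_k}}=\tfrac{1}{\sqrt{C_n}\,k\ln k}$, and an integral test bounds $\sum_{k=2}^{t-1}\tfrac{1}{k\ln k}\leq\ln\ln t$, which after rearrangement yields the claimed form (with $\bar\Delta_0$ identified with $\Delta_t^0$ and $t+1$ replaced by $t+2$ under the indexing convention of the paper).

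The main obstacle is the generalization in the first step: the proof of Lemma~\ref{lemm:3} uses specific features of the last iterate, and one has to verify that the active-set argument and projection bound transfer to every intermediate $k$. The delicate point is the lower bound $\la\hat a_t^j,\hat s_t\ra\geq\la\hat a_t^j,\bar V_{t,k-1}\ra$ for $j\in B_t$, which relies on $\bar V_{t,k-1}$ lying in $\hat D_t$; this is precisely why one introduces the vertex of $\hat D_t$ sharing the active set of $\hat s_{k-1}$ rather than working with $\hat s_{k-1}$ directly. The resulting bound on $\|\hat s_{k-1}-\bar V_{t,k-1}\|$ requires $\beta\in\mathcal E_{k-1}(\bar\de)\cap\mathcal E_t(\bar\de)$, which explains exactly why the hypothesis of Lemma~\ref{lemm:4} is $\F_t$ rather than just membership in $\mathcal E_{k-1}(\bar\de)$.
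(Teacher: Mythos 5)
Your proposal is correct and follows essentially the same route as the paper: treat Lemma~\ref{lemm:3} as a one-step recursion in the iterate index $k$ (with the estimate index $t$ frozen), unroll it from $k=t$ to $k=0$, evaluate the telescoping products $\prod(1-\gamma_j)$ for $\gamma_j=\frac{1}{j+2}$, use the cancellation of $\gamma_k(k+2)$, and bound $\sum_k 1/\sqrt{N_k}=\sum_k \frac{1}{\sqrt{C_n}\,k\ln k}=O(\ln\ln t)$. Your explicit note that Lemma~\ref{lemm:3} must first be generalized from the step $t-1\to t$ to a generic step $k-1\to k$ (and that this is where the full hypothesis $\F_t$ is actually used) fills in a detail the paper's ``by induction'' glosses over.
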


\begin{proof}
By induction, from Lemma \ref{lemm:3}  we have
 \begin{align*}
 &\min_i\la\hat{a}_t^i, \hat s_t - x_t\ra  \geq \prod_{j=0}^{t-1}(1-\gamma_j)\min_i\la \hat{a}_t^i ,x_{0} - \hat s_t \ra -\\
 &- \sum_{j = 0}^{t-1}\frac{2C_{\bar\de}\gamma_j}{\sqrt{N_j}}\max_i\| \hat{a}_t^i\|\prod_{k = j}^{t-1}(1-\gamma_k).
  \end{align*}
Note that $1-\gamma_k = \frac{k+1}{k+2}$, and
 \begin{align*}
 &\prod_{k=j}^{t-1}(1-\gamma_k) = \frac{(t)!/(j+1)!}{(t+1)!/(j+2)!} = \frac{j+2}{t+1}.
 \end{align*}
 \begin{align*}&\text{Thus, }\min_i\la \hat{a}_t^i, \hat s_t - x_t\ra\geq\\  &\geq \frac{1}{t+1} \min_i\la \hat{a}_t^i ,x_{0} - \hat s_t \ra - \sum_{j=0}^{t-1}\frac{j+2}{t+1}\frac{C_{\bar\de}\gamma_j}{\sqrt{N_j}}\max_i\| \hat{a}_t^i\|= \\
 &= \frac{1}{t+1}\min_i \la \hat{a}_t^i ,x_{0} - \hat s_t \ra - \frac{1}{t+1}\sum_{j=0}^{t-1}\frac{C_{\bar\de}}{\sqrt{N_j}}\max_i\| \hat{a}_t^i\|.
\end{align*}
Recall that $N_t = C_n t^2(\ln t)^2$, hence we have
\begin{align*}
&\e_t^i\geq\min_j\la \hat{a}_t^j, \hat s_t - x_t\ra  \geq  \frac{1}{t+2} \min_j\la \hat{a}_t^j , \hat s_t - x_0\ra -\\
&- \frac{1}{t+2}\sum_{j = 0}^{t}\frac{\sqrt{2\ln (j+1)+\ln 1/\bar\de}C_{\bar\de}}{\sqrt{C_n } (j+1) \ln (j+1)}\max_j\| \hat{a}_t^j\| =\\
&= \frac{1}{t+2}\left(\min_j\la \hat{a}_t^j , {{\Delta}_t^0} \ra - \frac{C_{\bar \de} \ln (\ln t) }{\sqrt{C_n }}\max_j\|\hat{a}_t^j\|\right),\\
& \textit{where $ {{\Delta}_t^0} = \hat{s}_t - x_0$. }
\end{align*}
\end{proof}
With Lemmas 3 and 4 in place, we are ready to prove \Cref{prop:gamma}.
 \subsection{Proof of \Cref{prop:gamma}}\label{proof:lemma1:1}

\begin{proof}
From Fact 2, the condition $x_t \in S_t(\bar \de)$ is equal to
$$\frac{\phi_{\bar \de}^2}{N_t} + \phi_{\bar \de}^2(x_t- \bar x_{t})^TR_{t}(x_t-\bar x_{t}) \leq \min_i[\e^i_{t}]^2.$$
From the bound on $\|R_t\|$ given in (\ref{R_t}) and knowing that $\Gamma$ is a diameter of the set $D$, we have
 $$\frac{\phi_{\bar \de}^2}{N_t} + \phi_{\bar \de}^2(x_t- \bar x_{t})^TR_{t}(x_t- \bar x_{t}) \leq \frac{\phi_{\bar \de}^2\left(1+\frac{d\Gamma^2}{\omega_0^2}\right)}{N_t}.$$
 From \Cref{lemm:4} and recalling that $\e_t^i = \min_i\la \hat a^i_t,\Delta_t^t\ra$, we have \begin{align}\label{eq:e}[\e_t^i]^2 \geq \frac{1}{(t+2)^2}\left(\min_i\la \hat{a}_t^i , {{\Delta}_t^0} \ra - \frac{C_{\bar \de} \ln (\ln t) }{\sqrt{C_n }}\max_i\|\hat{a}_t^i\|\right)^2.\end{align} 

Hence, we can guarantee that $x_t \in S_t(\bar \de)$ if 
\begin{align}\label{nt_bound}N_t \geq \frac{(t+2)^2\phi_{\bar \de}^2\left(1+\frac{d\Gamma^2}{\omega_0^2}\right)}{\left(\min_i\la \hat{a}_t^i , {{\Delta}_t^0} \ra - \frac{C_{\bar \de} \ln (\ln t) }{\sqrt{C_n }}\max_i\|\hat{a}_t^i\|\right)^2}.\end{align}

We denote by $L_{A} = \max_i\|a_i\|$.
Let us derive how far are $\min_i\la \hat{a}_t^i , {{\Delta}_t^0} \ra $ from $\e_0$ and $ \max_i\|\hat{a}_t^i\|$ from $L_{A}$. These are needed for obtaining a bound on the denominator above.
Let us define by $\Delta_0 = s_t - x_0$, where $s_t$ is the true vertex of $\hat s_t$ corresponding to  $\min_i\la \hat{a}_t^i , {{\Delta}_t^0} \ra$. Also recall that then $\min_i[\varepsilon_0] \leq \la a^i,\Delta_0\ra$. 
If $C_n \geq  \frac{C_{\bar\de}^2}{(\Gamma_0+1)^2}$, then with probability greater than $1 -\bar \de$ we have 
$\|{\Delta}_t^0  - \Delta_0\|\leq \frac{C_{\bar \de}}{\sqrt{N_t}}$(using  \Cref{lemm:1}). We also can bound the difference $\|\hat a^i_t - a^i\|$ by $\|\hat a^i_t - a^i\| \leq \phi^{-1}(\bar \de)\|\Sigma^{1/2}\| \leq \frac{C_{\bar\de}}{\sqrt{N_t}}\frac{1}{\sqrt{d}\rho_{\min}(D)(\Gamma_0+1)}.$ The second inequality follows from (\ref{sigma}) and definition of $C_{\bar\de}$ (\ref{def:c}).

 Combining above inequalities together with the bound (\ref{nt_bound}) on $N_t$ we can conclude the following. If
$$C_n\geq \frac{4 C_{\bar \de}^2 (\ln \ln T)^2 
L_{A}^2}{ \min_i[\e^i_0]^2}, $$
then we can guarantee that $x_t \in S_t(\bar \de)$ by {requiring}
$$N_t \geq \frac{(t+2)^2\phi_{\bar \de}^2\left(1+\frac{d\Gamma^2}{\omega_0^2}\right)}{\min_i[\e^i_0]^2}.$$
Since $n_t = C_n(t+1)(\ln (t+2))^2$ and $N_t = \sum_{k=0}^t n_k$, we obtain that $$N_t \geq C_n (t+1)^2(\ln (t+2))^2.$$  Hence, $C_n \geq \frac{\phi_{\bar \de}^2\left(1+\frac{d\Gamma_0^2}{\omega_0^2}\right)}{(\ln (t+2))^2\min_i[\e^i_0]^2}$ 
 is enough to ensure that $x_t\in S_t(\bar \de)$.
Note that  $C_{\bar \de} \geq \phi_{\bar \de}^2\left(1+\frac{d\Gamma^2}{\omega_0^2}\right)$.
Thus, under the proper choice of constant parameter $C_n$, namely, \begin{align*}C_n \geq 
\max\left\{\frac{ 
4 C_{\bar \de}^2 (\ln \ln T)^2L_{A}^2 }{[\e_0]^2},\frac{C_{\bar\de}^2}{(\Gamma_0+1)^2}\right\}\end{align*} we conclude that 
$x_t \in S_t(\bar \de).$
\end{proof}
\paragraph{Remark}
Note that if we use a step size as in classical FW, $\gamma_t = \frac{2}{t+2}$,  or in more general form  $\gamma_t = \frac{l}{t+l}$ then we obtain that the distance to the boundaries $\min_i[\e_t^i]$ will decrease with the rate upper bounded by  $\prod_{k=0}^t(1-\gamma_k) = \frac{l!}{t\cdot \ldots \cdot (t+l)} = O(\frac{1}{t^l})$ instead of (\ref{eq:e}) and this bound can be achieved {e.g. in the case if the algorithm always moves in the same direction towards the boundary}. This implies that in order to keep the convergence rate as in the original FW while satisfying $x_t\in S_t(\bar\de)$, due to Fact 2 we have to reduce the uncertainty of the boundaries faster, i.e., we need to take more measurements at each iteration.
\section{\Cref{thm:conv}}\label{proof:thm2}
\begin{proof}
Let 
$\mu_t(\bar \de)$ denote a constant such that $\bar E_t(\bar\de) = \frac{1}{2}\mu_t(\bar\de) \gamma_t C_f$. Then with probability $1-\bar\de$ we have 
$$\la \hat{s},\nabla f(x_t)\ra \leq \min_{s\in D} \la s, \nabla f(x_t)\ra + \frac{1}{2}\mu_t(\bar\de) \gamma_t C_f.$$
For the proof we refer to the following result from \cite{jaggi2013revisiting}. {This result holds in our setting {since} we use the same notions as in \cite{jaggi2013revisiting} of $g_t$ and $s_t$ defined in (\ref{sol_gap}). }
\begin{lemma}{(Lemma 5 \cite{jaggi2013revisiting})}
For a step $x_{t+1}= x_t + \gamma (\hat s-x_t)$ with an arbitrary step-size $\gamma \in [0,1]$, it holds that $$f(x_{t+1}) \leq f(x_t) - \gamma g_t + \frac{\gamma^2}{2}C_f(1+\mu_t),$$ 
{if $\hat s$ is an approximate linear minimizer, i.e. $$\la \hat s, \nabla f(x_t) \ra \leq \min_{\bar s\in D} \la \bar s, \nabla f(x_t)\ra + \frac{1}{2}\mu_t \gamma C_f.$$ }
\end{lemma}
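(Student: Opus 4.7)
The plan is to reduce the statement to two ingredients: (i) the quadratic upper model of $f$ supplied by the curvature constant $C_f$, and (ii) the approximate‑minimality assumption on $\hat s$, each applied once. No probabilistic tools or safety‑set machinery enter: this is a deterministic per‑step inequality, so the proof is purely analytic.

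First, I would apply the definition of $C_f$ directly with $x = x_t$, $s = \hat s$, and step size $\gamma$, taking $y = x_{t+1} = x_t + \gamma(\hat s - x_t)$. By the definition of the curvature constant in \Cref{sec:convergence}, this yields an inequality of the form
\begin{equation*}
f(x_{t+1}) \leq f(x_t) + \gamma \langle \hat s - x_t, \nabla f(x_t)\rangle + \tfrac{\gamma^2}{2} C_f,
\end{equation*}
which isolates the linear model error into a single $O(\gamma^2)$ term. Next I would decompose the linear term against the \emph{true} FW direction $s_t = \arg\min_{s\in D}\langle s,\nabla f(x_t)\rangle$ by writing
\begin{equation*}
\langle \hat s - x_t, \nabla f(x_t)\rangle = \langle s_t - x_t, \nabla f(x_t)\rangle + \langle \hat s - s_t, \nabla f(x_t)\rangle.
\end{equation*}
The first summand is precisely $-g_t$ in the FW duality‑gap convention used by \cite{jaggi2013revisiting} (recall $f(x_t) - f(x_*) \leq g_t$ is what makes $g_t$ a meaningful gap), and the second is controlled by the approximate‑minimality hypothesis, which gives $\langle \hat s - s_t, \nabla f(x_t)\rangle \leq \tfrac{1}{2}\mu_t \gamma C_f$ directly.

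Plugging these two substitutions back into the smoothness bound yields
\begin{equation*}
f(x_{t+1}) \leq f(x_t) - \gamma g_t + \tfrac{\gamma^2}{2} \mu_t C_f + \tfrac{\gamma^2}{2} C_f = f(x_t) - \gamma g_t + \tfrac{\gamma^2}{2} C_f (1 + \mu_t),
\end{equation*}
which is the claim. The only subtlety worth double‑checking is the sign convention for $g_t$: the statement preceding Lemma 5 defines $g_t = \min_{s\in D}\langle s,\nabla f(x_t)\rangle$, but the inequality $f(x_t)-f(x_*)\leq g_t$ and the form of the descent bound only make sense under the standard FW convention $g_t = \langle x_t - s_t, \nabla f(x_t)\rangle \geq 0$. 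I would therefore state the proof under this convention and treat the $\min$‑form as shorthand for the gap against the current iterate. Beyond this bookkeeping point, the argument is two applications of a definition with no genuine obstacle.
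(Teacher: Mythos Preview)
Your argument is correct and is exactly the standard two-step proof from \cite{jaggi2013revisiting}: apply the curvature bound at $(x_t,\hat s,\gamma)$, then split the linear term against $s_t$ and absorb the approximation error. The paper does not supply its own proof of this lemma---it is quoted verbatim from \cite{jaggi2013revisiting}---so there is nothing further to compare; your remark on the $g_t$ sign convention is also well taken, as the paper's display $g_t=\min_{s\in D}\langle s,\nabla f(x_t)\rangle$ is inconsistent with both $f(x_t)-f(x_*)\le g_t$ and the descent inequality, and the intended meaning throughout is the standard FW gap $g_t=\langle x_t-s_t,\nabla f(x_t)\rangle$.
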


 The step-size of the SFW algorithm is equal to $\gamma_t =  \frac{1}{t+2}$.
Let us define $h_t$  as follows \begin{align*}
h_t &= h(x_t) = f(x_t) - f(x_*).\end{align*}
Then we obtain that
\begin{align*}&h_{t+1} \leq h_t - \gamma_t g_t + \gamma_t^2 \frac{C_f}{2}(1+\mu_t(\bar\de)) \\&\leq h_t - \gamma_t h_t + \gamma_t^2 \frac{C_f}{2}(1+\mu_t(\bar\de)) \\
&= (1-\gamma_t)h_t + \gamma_t^2\frac{C_f}{2}(1+\mu_t(\bar\de)).\end{align*}


If we continue in the same manner, we obtain
\begin{align*}
&h_{t+1}  \leq  \prod_{i=0}^t(1-\gamma_i)h_0 + \sum_{k = 0}^t \gamma_k^2\frac{C_f}{2}(1+\mu_k(\bar\de))\prod_{i = k}^t(1-\gamma_i) \\
 &=  \prod_{i=0}^t\frac{i+1}{i+2}h_0 + \sum_{k = 0}^t \frac{1}{(k+2)^2}\frac{C_f}{2}(1+\mu_k(\bar\de))\prod_{i = k}^t\frac{i}{i+2} \\
&=\frac{1}{t+2}h_0+\sum_{k = 0}^t \frac{1}{(k+2)^2}\frac{C_f (1+\mu_k(\bar\de))}{2}\frac{(t+1)!(k+2)!}{(t+2)!(k+1)!}\\
&=\frac{1}{t+2}\left(h_0+\sum_{k = 0}^t \frac{1}{(k+2)}\frac{C_f (1+\mu_k(\bar\de))}{2}\right).
\end{align*}

 Recall that $\bar E_t(\bar \de)$ denotes the upper bound on $E_t$ with the confidence level $1-\bar \de$. 

Due to \Cref{prop:dfs}, we have $$E_k(\bar \de) = \frac{M C_{\bar \de}}{\sqrt{N_k}} = \frac{M C_{\bar \de}}{\sqrt{C_n}(k+2) \ln (k+2)}.  $$
Hence, we obtain that $$\mu_k(\bar\de) = \frac{2 E_k(\bar \de) (k+2)}{C_f} = \frac{2M C_{\bar \de}}{C_f\sqrt{C_n} \ln (k+2)}.$$

Therefore, we obtain \begin{align*}
h_{t+1} & \leq \frac{h_0+\ln {(t+2)} \frac{C_f}{2} + \sum_{k=0}^t \frac{C_f\mu_k(\bar\de)}{2} }{t+2}=\\
& =  \frac{h_0+\ln (t+2) \frac{C_f}{2} +\ln\ln (t+2)\frac{C'}{2}}{t+2},
\end{align*}
where $C' = \frac{ M C_{\bar\de}}{\sqrt{C_n}}.$
\end{proof}
\paragraph{\Cref{cor:conv}}

\begin{proof}Recall that $$\phi^{-1}(\bar \de) =  \max\left\{
\sqrt{128 d \log N_t \log \left(\frac{N_t^2}{\bar\delta}\right)},
\frac{8}{3}\log \frac{N_t^2}{\bar\delta}\right\}.$$ Hence, $$\phi_{\bar\delta} =  
O\left(\sigma\max\left\{ \sqrt{d} \log t \sqrt{\log \frac{1}{\bar\delta}}, \log t + \log \frac{1}{\bar\delta}\right\}\right).$$
 Recall that the total number of measurements $N_t$  satisfies \begin{align*}&N_t = 2C_{\bar \de}^2\max\left\{
 \frac{4 (\ln \ln T)^2L_{A}^2 }{[\e_0]^2},\frac{1}{(\Gamma_0 +1)^2}\right\}\cdot\\
 &\cdot (t+2)^2(\ln(t+2))^2,\\
  &\text{where } C_{\bar \de} = \frac{2 \phi_{\bar \de} d (\Gamma_0+1) }{\rho_{\min}(D)}\sqrt{\frac{\Gamma_0^2+1}{\omega_0^2} + 1}.\end{align*}
 Hence, we conclude $$N_t  = \tilde O\left(\frac{\phi_{\bar \de}^2d^2}{t^2}\right)= \tilde O\left(\max\left\{\frac{d^3 \ln\frac{1}{\bar \de}}{\epsilon^2}, \frac{d^2 \ln^2\frac{1}{\bar \de}}{\epsilon^2}\right\}\right).$$
\end{proof}

\end{document}